\documentclass[a4paper, 10pt]{article}
\usepackage{courier}
\usepackage{blindtext} 
\usepackage{amsmath}
\usepackage{amssymb}
\usepackage[colorlinks=true,breaklinks]{hyperref}
\usepackage[hyphenbreaks]{breakurl}
\usepackage{xcolor}
\definecolor{c1}{rgb}{0,0,1}
\definecolor{c2}{rgb}{0,0.3,0.9}
\definecolor{c3}{rgb}{0.3,0.9}
\hypersetup{linkcolor={c1},citecolor={c2},urlcolor={c3}}
\usepackage{enumerate}
\usepackage{todonotes}
\usepackage{makeidx}
\makeindex
\usepackage[top=3.5cm,bottom=3.5cm,left=2.5cm,right=2.5cm]{geometry}
\sloppy
\hyphenation{}
\hyphenpenalty=10000
\exhyphenpenalty=10000

\usepackage{fancyhdr}


\def\XXint#1#2#3{{\setbox0=\hbox{$#1{#2#3}{\int}$ }
\vcenter{\hbox{$#2#3$ }}\kern-.6\wd0}}


\usepackage{amsthm}
\theoremstyle{plain}
\newtheorem{theorem}{Theorem}[section]

\theoremstyle{definition}
\newtheorem{definition}[theorem]{Definition}

\theoremstyle{lemma}
\newtheorem{lemma}[theorem]{Lemma}

\theoremstyle{Remark}

\theoremstyle{proposition}

\theoremstyle{corollary}

\theoremstyle{example}

\theoremstyle{assumption}

\usepackage{mathrsfs}
\usepackage{systeme}
\usepackage{colonequals}

\begin{document}
\pagestyle{empty}
\title{Long time behavior of discrete velocity kinetic equations}
\author{Gayrat Toshpulatov\thanks{Institut f\"ur Analysis und Numerik, Fachbereich Mathematik
und Informatik der Universit\"at M\"unster, Orl\'eans-Ring 10, 48149 M\"unster, Germany, {\tt  gayrat.toshpulatov@uni-muenster.de}}}

\maketitle

\pagestyle{plain}

\begin{abstract}
We study long time behavior of some nonlinear discrete velocity kinetic equations in the one and three dimensions with periodic boundary conditions. 
We prove the exponential time decay of solutions towards the global equilibrium
in the $L^2$ space.  Our result holds for a wide class of interaction rates including the Goldstein-Taylor and Carleman equations, and
the estimates on the rate of convergence are explicit and constructive. The technique is based on the construction of suitable Lyapunov
functionals by modifying Boltzmann's entropy.
\end{abstract}
\textbf{Keywords:} Kinetic theory of gases, discrete velocity models, the Goldstein-Taylor equation, the Carlemann equation,  convergence to equilibrium, hypocoercivity. \\
\textbf{2020 Mathematics Subject Classification:} 35Q82, 35B40, 82C40.
\tableofcontents

\section{Introduction}
We consider the initial value problem 
\begin{equation}\label{1Eq}
\begin{cases}
\partial_t u+c\partial_x u=k(u,v,x)(v-u),\,\,\,\,&x\in \mathbb{T}, \,\,t>0,\\
\partial_t v-c\partial_x v=k(u,v,x)(u-v),\,\,\,\,\,&x\in \mathbb{T}, \,\,t>0,\\
u_{|t=0}=u_0,\,\,v_{|t=0}=v_0, &x\in \mathbb{T}.
\end{cases}
\end{equation}
This equation describes the evolution of the velocity distribution of a gas composed of two species that move at a constant speed $c>0$ in the $x-$direction. The variables $t\geq 0$ and $x\in \mathbb{T}=\mathbb{R}/\mathbb{Z}$ stand for time and position, respectively. The first unknown $u=u(t,x)$ denotes  the density function of particles moving at speed $c$ in the positive $x-$direction, while the second unknown $v=v(t,x)$ is   the density function of particles moving at speed $c$ in the negative $x-$direction. The function  $k(u,v,x)$ is  non-negative and describes the interaction between gas particles. For example, the case $k(u,v,x)=1$ is known as the Goldstein-Taylor model \cite{Gol}, \cite{Tay}, which can be converted to the telegraph equation. The case  $k(u,v,x)=u+v$ was introduced by Carlemann in \cite{Car} to describe the interaction of two type of particles such that an interaction of two particles of the first type results into two particles of the second type and vice  versa. 
The macroscopic variables for \eqref{1Eq} are the mass density $\rho\colonequals u+v$ and the momentum  $j\colonequals u-v.$ With this notations, \eqref{1Eq} is equivalent to 
\begin{equation}\label{1Eq0}
\begin{cases}
\partial_t \rho+c\partial_x j=0,\\
\partial_t j+c\partial_x \rho=-2kj.
\end{cases}
\end{equation}

Equation \eqref{1Eq}  can be extended to the three dimensions. In this case the gas particles can move in directions parallel to one of the axes $x_1,x_2,x_3$ either in the positive direction or the negative one at a constant speed $c>0$. Let $u_i=u_i(t,x_1,x_2,x_3)$ (respectively $v_i=v_i(t,x_1,x_2,x_3)$) denote the density of particles moving at speed $c$ in the positive (respectively negative) $x_i-$direction for $i\in \{1,2,3\}$. Then we have 
\begin{equation}\label{2Eq}
\begin{cases}
\partial_t u_i+c\partial_{x_i} u_i=k(u_1,u_2,u_3,v_1,v_2,v_3,x)(\rho-6u_i)\,\,\,\,&x\in \mathbb{T}^3, \,\,t>0,\\
\partial_t v_i-c\partial_{x_i}v_i =k(u_1,u_2,u_3,v_1,v_2,v_3,x)(\rho-6v_i), \,\,\,\,&x\in \mathbb{T}^3, \,\,t>0,\\
{u_i}_{|t=0}=u_{i,0},\,\,{v_{i}}_{|t=0}=v_{i,0}, \,\,\,\,&x\in \mathbb{T}^3,
\end{cases}
\end{equation}
where $\rho\colonequals \sum_{i=1}^3 (u_i+v_i)$ denotes mass density function as in the one dimensional case. 
We denote the mass densities in the $x_i-$directions as $$\rho_i\colonequals u_i+v_i,\,\,\,i\in \{1,2,3\}.$$
The momentum is defined as  $$j\colonequals \begin{pmatrix}
u_1-v_1\\
u_2-v_2\\
u_3-v_3
\end{pmatrix} \in \mathbb{R}^3.$$ With this notations, the following   macroscopic system   can be derived from \eqref{2Eq}
\begin{equation}\label{2Eq0}
\begin{cases}
\partial_t \rho+c\mathrm{div}_x j=0,\\
\partial_t j+c\begin{pmatrix}
\partial_{x_1}\rho_1\\ \partial_{x_2}\rho_2\\
\partial_{x_3}\rho_3
\end{pmatrix}=-6kj.
\end{cases}
\end{equation}

\bigskip

Whenever $u,v$ is a classical solution to \eqref{1Eq}, by integrating the first equation in \eqref{1Eq0} we obtain \emph{conservation of mass} 
\begin{equation*}
\int_{\mathbb{T}}(u(t,x)+v(t,x))dx=\int_{\mathbb{T}}(u_0(x)+v_0(x))dx,\,\,\,\,\forall\,t\geq 0.
\end{equation*}
Another interesting property of \eqref{1Eq} is that a family of functionals decay under the time evolution of the classical solution $u,v$. Let $\varphi$ be a $C^1$ convex function defined on  $[0,\infty)$ (e.g., $\varphi(u)=u^p,\, p\geq 1$). Multiplying the first and second equations in \eqref{1Eq} by $\varphi'(u)$ and $\varphi'(v),$ respectively, and integrating over $\mathbb{T},$  we obtain  
\begin{equation}\label{d/dt}
\frac{d}{dt}\int_{\mathbb{T}}(\varphi(u)+\varphi(v))dx=-\int_{\mathbb{T}}k(u,v,x)(u-v)(\varphi'(u)-\varphi'(v))dx.
\end{equation}
Since $\varphi$ is convex, the right hand side of this equation is non-positive. Hence, the integral  
$ \int_{\mathbb{T}}\big[\varphi(u)+\varphi(v)\big]dx$ is a non-increasing function of  $t\geq 0.$ 
The right hand side of \eqref{d/dt} vanishes if $u(t,x)=v(t,x)$ for all $t>0$ and $x\in \mathbb{T}.$ We call such functions as a \emph{local equilibrium}. One can check that a local equilibrium $u,v$ solves \eqref{1Eq} if 
 $u$ and $v$ are equal to a positive constant $m_{\infty}$ for all $t\geq 0$ and $x\in \mathbb{T}.$ We call such functions  as a \emph{global equilibrium}.

The three dimensional equation \eqref{2Eq} has the same properties as the one dimensional one \eqref{1Eq}. The initial mass is conserved
\begin{equation*}
\int_{\mathbb{T}^3} \sum_{i=1}^3(u_{i}(t,x)+v_{i}(t,x)) dx= \int_{\mathbb{T}^3} \sum_{i=1}^3(u_{0,i}(x)+v_{0,i}(x)) dx,\,\,\,\,\,\forall\, t\geq 0.
\end{equation*}
 As in the one dimensional case, for any convex function $\varphi:[0,\infty)\to \mathbb{R},$ one can check that
 \begin{align}\label{d/dt3}
 \frac{d}{dt} \int_{\mathbb{T}^3}\sum_{i=1}^3 (\varphi (u_i)+\varphi(v_i))dx=& -\frac{1}{2}\int_{\mathbb{T}^3}k\sum_{\substack{i,j=1\\ i\neq j}}^3(\varphi'(u_i)-\varphi'(u_j))(u_i-u_j) dx\nonumber\\
 &-\frac{1}{2}\int_{\mathbb{T}^3}k\sum_{\substack{i,j=1\\i\neq j}}^3(\varphi'(v_i)-\varphi'(v_j))(v_i-v_j) dx\nonumber \\
 &-\frac{1}{2} \int_{\mathbb{T}^3}k\sum_{i, j=1}^3(\varphi'(u_i)-\varphi'(v_j))(u_i-v_j) dx\leq 0.
 \end{align}
 Hence,  the integral $\int_{\mathbb{T}^3}\sum_{i=1}^3 (\varphi (u_i)+\varphi(v_i))dx$
 is a non-increasing function of  $t\geq 0.$
From \eqref{d/dt3}, we also conclude that $u_i,v_i$ is a local equilibrium (i.e., the right hand side of \eqref{d/dt3} vanishes) if $u_i(t,x)=v_i(t,x)={\rho(t,x)}/{6}$ for all $ i\in\{1,2,3\},$ $t\geq 0,$ and $x\in \mathbb{T}^3,$ where we remind $\rho=\sum_{i=1}^3(u_i+v_i).$  $u_i,v_i$ is a global equilibrium if there exists $m_{\infty}>0$ and $u_i(t,x)=v_i(t,x)=m_{\infty}$ for all $ i\in\{1,2,3\},$ $t\geq 0,$ and $x\in \mathbb{T}^3.$ 

Let $u(t,x), v(t,x)$ be the solution of the one dimensional equation \eqref{1Eq}.  On the basis of the decay of   the integral $\int_{\mathbb{T}}(\varphi (u)+\varphi(v))dx$, 
 one can expect that $\int_{\mathbb{T}}(\varphi (u)+\varphi(v))dx$ decays its minimum as $t\to \infty.$ From \eqref{d/dt} we see that $u,v$ solves \eqref{1Eq} and minimizes $\int_{\mathbb{T}}(\varphi (u)+\varphi(v))dx$ at the same time if $u=v=m_{\infty}$ for some constant $m_{\infty}$ and for all $t\geq 0,\, x\in \mathbb{T}.$ Hence, one can conjecture that   $u(t,x)$ and $ v(t,x)$ 
  converges to  a constant $m_{\infty}$ as $t\to \infty.$ Since $u(t,x)+v(t,x)$ 
  has the same as  $u_0(x)+v_0(x)$, 
   we expect that  $$m_{\infty}=\frac{1}{2}\int_{\mathbb{T}}(u_0(x)+v_0(x))dx.$$
   The same conjecture can be formulated for the three dimensional equation \eqref{2Eq}, i.e., $u_i(t,x)$ and $v_{i}(t,x)$ converge to $$m_{\infty}=\frac{1}{6}\int_{\mathbb{T}^3}\sum_{i=1}^3(u_{i,0}(x)+v_{i,0}(x))dx$$ as $t\to \infty$ for all $i\in \{1,2,3\}.$  Thus, it is an interesting problem to prove (or disprove) these convergences and to estimate the convergence rate. Our goal is to prove these convergences for the  equations \eqref{1Eq} and \eqref{2Eq} with explicit and constructive convergence rates.

There are many works concerning the equations \eqref{1Eq} and \eqref{2Eq}. We refer \cite{Gat, Cab, Review, L.T} for a general introduction to the discrete velocity kinetic equations.   Existence and uniqueness of weak solutions to \eqref{1Eq} and \eqref{2Eq} was proven by Lions and Toscani in \cite{L.T};   see Theorem \ref{exist} and \ref{exist3} below. The diffusion approximation of \eqref{1Eq} has been studied  mostly for the case $k(u,v,x)=(u+v)^{\alpha}$ with $\alpha\leq  1.$  In this case, the asymptotics leads to the fast diffusion equation if $\alpha\in [0,1],$ the slow diffusion equation if $\alpha\in [-1,0),$ and the porous media  equation if $\alpha<-1;$ see  \cite{P.T, L.T, S.V, G.S}. 

Concerning the large time behavior, there are studies only for  the Goldstein-Taylor equation (i.e., $k(u,v,x)=1$) and the Carleman equation (i.e., $k(u,v,x)=u+v$). In \cite{Des.Sal}, algebraic decay of solutions to its equilibrium in $L^2(\mathbb{T})$ was proved for  the Goldstein-Taylor equation with a spatially dependent interaction rate $k=k(x)\in H^1(\mathbb{T})$ and for  initial data $u_0,v_0\in H^2(\mathbb{T}).$ The authors used  the entropy method \cite{D.V} which consists of obtaining differential inequalities for the entropy and its dissipation. Bernard and  Salvarani in \cite{Ber.Sal}  proved exponential convergence to equilibrium  for the Goldstein-Taylor equation with  initial data $u_0,v_0\in H^1(\mathbb{T})$ by using its connection to the telegraph equation. Their decay rate is optimal and the  interaction rate $k=k(x)\geq 0$ does not have to be bounded from below by a positive constant. Similar  results were obtained in \cite{Ant} for the one and three dimensional Goldstein-Taylor equations by constructing suitable Lypunov functionals.
The long time behavior of $C^1$ solutions to the one dimensional Carleman equation was studied by Illner and Reed  in the whole domain \cite{Car2} and in the torus \cite{Car1}. The authors proved that in the whole domain there exists $C>0$ such that     $\max_{x\in \mathbb{R}}\{u(t,x),v(t,x)\}\leq \frac{C}{t} $  for all $t>0.$ They also proved in the torus that there exists $C>0$ such that $\max_{x\in \mathbb{T}}|u(t,x)-m_{\infty}|\leq \frac{C}{t} $ and $\max_{x\in \mathbb{T}}|v(t,x)-m_{\infty}|\leq \frac{C}{t} $ for all $t>0.$

In this paper, we shall improve these previous results. We consider global weak solutions established by Lions and Toscani in \cite{L.T}.  We establish exponential decay of  weak solutions to their global equilibrium  in
the $L^2$ space for a wide class of interaction rates $k$ in the one and three dimensional cases. Moreover, our decay rates are explicit and
constructive. The idea consists in constructing  Lypunov functionals for the one and three dimensional equations, which are equivalent to the square of the $L^2$ norm and and satisfy a Gr\"onwall inequality. This functionals are  constructed by modifying  Boltzmann's entropy.

 In recent years, many new so called  hypocoercivity methods have been introduced to study the long-time behavior of spatially inhomogeneous kinetic equations.
The challenge of hypocoercivity is to understand the interplay between the collision operator that provides dissipativity in the velocity variable and the transport one which is conservative, in order to obtain global dissipativity for the whole problem, see \cite{Guo, D.V, NeuMou, V, Har, AT1}. Most of these methods are developed for   linear kinetic equations and one has to work  in close-to-equilibrium regime for nonlinear kinetic equations.  
The main motivation of this work is to develop new hypocoercivity methods for  the nonlinear discrete velocity kinetic equations.     The method to be derived here does not require any close-to-equilibrium assumption on the initial data in the spirit of the recent works \cite{FPS, P.Tosh}.

The organization of this paper is as follows. In Section 2 we mention  existence and uniqueness results in literature, then we  state our main results. Section 3 is devoted to the one dimensional equation, where we first construct a Lypunov functional and establish its  equivalence to the $L^2$ norm. The proofs of our main result for the one dimensional equation is given at the end of Section 3. In Section 4 the analysis which is done for the one dimensional equation  is extended for the three dimensional equation and the proofs are given. Finally,
 we conclude and discuss possible extensions in Section 5.






\section{Main results}
Motivated by the work of Lions and Toscani in \cite{L.T}, we first introduce some conditions on the interaction rate $k.$
\begin{definition}\label{def} Let $k(u,v,x)$ (resp. $k(u_1,u_2,u_3,v_1,v_2,v_3,x)$) be a non-negative, measurable function defined for $u,v\in [0,\infty)$ (resp. $u_i,v_i\in [0,\infty),$ $i\in \{1,2,3\}$) and $x\in \mathbb{T}$ (resp. $x\in \mathbb{T}^3$). 
\begin{itemize}
\item[(i)]
We say $k(u,v,x)$ (resp. $k(u_1,...,v_3,x)$) is an \emph{ interaction rate of type 1}  if for every $\gamma>0$ there exists $\kappa_1=\kappa_1(\gamma)>0$ such that $0\leq k(u,v,x)\leq \kappa_1$ (resp. $0\leq k(u_1,...,v_3,x) \leq \kappa_1$) for all $0\leq  u,v\leq \gamma$ (resp. $0\leq  u_i,v_i\leq \gamma, $ $i\in \{1,2,3\}$) and $x\in \mathbb{T}$ (resp. $x\in \mathbb{T}^3$). 
\item[(ii)] We say $k(u,v,x)$ (resp. $k(u_1,...,v_3,x)$) is an \emph{ interaction rate of type 2} if $k(0,0,x)=+\infty$ (resp. $k(0,0,0,0,0,0,x)=+\infty$) and for every $\gamma>0$ there exists $\kappa_2=\kappa_2(\gamma)>0$ such that
$0\leq k(u,v,x)\leq \kappa_2$ (resp. $0\leq k(u_1,...,v_3,x)\leq \kappa_2$) for all $u,v\geq \gamma $ (resp. $u_i,v_i\geq \gamma,$ $i\in\{1,2,3\}$)  and $x\in \mathbb{T}$ (resp. $x\in \mathbb{T}^3$).
\item[(iii)] We say $k(u,v,x)$ (resp. $k(u_1,...,v_3,x)$) is an \emph{ interaction rate of type 3} if for every $\gamma_1>0$ and $\gamma_2>0$ there exist $\kappa_3=\kappa_3(\gamma_1,\gamma_2)>0$ and $\kappa_4=\kappa_4(\gamma_1,\gamma_2)>0$ such that
$\kappa_3\leq k(u,v,x)\leq \kappa_4$ (resp. $\kappa_3\leq k(u_1,...,v_3,x)\leq \kappa_4$) for all $\gamma_1\leq u,v\leq \gamma_2 $ (resp. $\gamma_1\leq u_i,v_i\leq \gamma_2, $ $i\in \{1,2,3\}$) and $x\in \mathbb{T}$ (resp. $x\in \mathbb{T}^3$).
\end{itemize}
\end{definition}
 For example, $k(u,v,x)=(u+v)^{\alpha}$ (or $k(u_1,u_2,u_3,v_1,v_2,v_3,x)=
 (u_1+u_2+u_3+v_1+v_2+v_3)^{\alpha}$) is of type 1 if $\alpha\geq 0,$  of type 2 if $\alpha<0,$ and type 3 for all $\alpha\in \mathbb{R}.$

Under the above assumptions on  the interaction rate $k,$ Lions and Toscani \cite{L.T} proved existence and uniqueness of global weak solutions to \eqref{1Eq}.
\begin{theorem}[Proposition 2.1 and 2.3 in \cite{L.T}]\label{exist} Let $0\leq u_0, v_0\in L^{\infty}(\mathbb{T}).$
\begin{itemize}
\item[(i)] Let $k$ be an  interaction rate of type 1. Then, there exists a unique global weak solution $u,v\in L^{\infty}([0,T)\times \mathbb{T})\cap C([0, T];L^p(\mathbb{T}))$ for all $T>0,$ $p\in [1, \infty).$ In addition, the following bound holds 
\begin{equation}\label{max}
\max\big\{||u(t)||_{L^{\infty}(\mathbb{T})}, ||v(t)||_{L^{\infty}(\mathbb{T})}\big\}\leq \max\big\{||u_0||_{L^{\infty}(\mathbb{T})}, ||v_0||_{L^{\infty}(\mathbb{T})}\big\},\,\,\,\,\forall\,t\geq 0.
\end{equation}
\item[(ii)] Let $k$ be an  interaction rate of type 2. Assume that there is $\delta>0$ such that $u_0, v_0\geq \delta$ on $\mathbb{T}.$ 
 Then, there exists a unique global weak solution $u,v\in L^{\infty}([0,T)\times \mathbb{T})\cap C([0, T];L^p(\mathbb{T}))$ for all $T>0,$ $p\in [1, \infty).$ In addition, this solution satisfies the bound \eqref{max} and  
\begin{equation}\label{min} \mathrm{ess\,inf}_{x\in \mathbb{T}}\big\{u(t,x),v(t, x)\big\}\geq \mathrm{ess\,inf}_{x\in \mathbb{T}}\big\{u_0(x),v_0(x)\big\},\,\,\,\,\forall\, t\geq 0
.\end{equation}
\end{itemize} 
\end{theorem}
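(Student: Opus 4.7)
The plan is to recast \eqref{1Eq} as a Volterra system by integrating along characteristics. Along the forward characteristic $\xi\mapsto x_0+c\xi$, writing $U(\xi)=u(\xi,x_0+c\xi)$ and $V(\xi)=v(\xi,x_0+c\xi)$, the first equation reads $U'+kU=kV$, and the integrating factor gives
\begin{equation*}
u(t,x)=u_0(x-ct)\,e^{-a(t,x)}+\int_0^t[kv](s,x-c(t-s))\,e^{-(a(t,x)-a(s,x-c(t-s)))}\,ds,
\end{equation*}
together with an analogous identity for $v$ along the backward characteristic. A short calculation shows that the two weights $e^{-a(t,x)}$ and $\int_0^t k\,e^{-\cdots}\,ds$ sum to $1$, so $u(t,x)$ is a \emph{convex combination} of $u_0(x-ct)$ and past values of $v$. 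This observation drives the whole argument.

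For part~(i), fix $\gamma:=\max\{\|u_0\|_\infty,\|v_0\|_\infty\}$. I would run a Banach fixed-point argument for the Duhamel system on the closed set of non-negative pairs in $L^\infty([0,T]\times\mathbb{T})^2$ bounded by $\gamma$, for $T$ depending only on $\kappa_1(\gamma)$. The type~1 bound $k\leq\kappa_1(\gamma)$ on this set, combined with a local Lipschitz dependence of $k$ in $(u,v)$ (if necessary obtained from Lipschitz truncations of $k$ and a limiting argument, as in~\cite{L.T}), produces the contraction. Non-negativity of iterates is visible from the formula. The sup-bound~\eqref{max} is forced by the convex-combination structure: setting $M(t):=\max\{\|u(t)\|_\infty,\|v(t)\|_\infty\}$ and $M_0:=\max\{\|u_0\|_\infty,\|v_0\|_\infty\}$, the Duhamel identity yields $M(t)\leq\max\{M_0,\sup_{s\leq t}M(s)\}$, which by a continuity argument forces $M(t)\leq M_0$ for all $t$. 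This prevents the solution from ever leaving the ball of radius $\gamma$, and the local existence extends to all of $[0,\infty)$. The $L^p$-continuity in time follows from continuity of translation in $L^p$ applied to the free-streaming part and the uniform $L^\infty$ control of the Duhamel integral.

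For part~(ii), the new obstruction is that $k(0,0,\cdot)=+\infty$, so the Picard map is not a priori defined on pairs which may touch zero. The same Duhamel identity, however, yields a lower bound by the symmetric convex-combination argument: if $\underline m:=\mathrm{ess\,inf}\{u_0,v_0\}\geq\delta$, then $(u(s,\cdot),v(s,\cdot))\geq\underline m$ as long as this holds on $[0,t)$. I would therefore work on the invariant region $R=\{(u,v):\underline m\leq u,v\leq\gamma\}$, on which type~2 gives $k\leq\kappa_2(\underline m)$, and run either the contraction of part~(i) on $R$, or an explicit monotone Picard iteration initialised at the constant pair $(\underline m,\underline m)$, every iterate of which stays in $R$. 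This yields the solution together with both~\eqref{max} and~\eqref{min}.

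Uniqueness in either case reduces to a Grönwall argument: subtracting the Duhamel identities for two solutions $(u_i,v_i)$ sharing data and using the local Lipschitz control of $k$ on the invariant region gives
\begin{equation*}
\|u_1(t)-u_2(t)\|_\infty+\|v_1(t)-v_2(t)\|_\infty\leq C\int_0^t\!\big(\|u_1-u_2\|_\infty+\|v_1-v_2\|_\infty\big)\,ds.
\end{equation*}
The main obstacle is part~(ii): one has to construct the solution and prove the lower bound \emph{simultaneously}, since the equation is ill-defined without the lower bound, yet the lower bound is an assertion about the solution. The monotone iteration on the invariant region $R$ above is the cleanest way to break this circularity.
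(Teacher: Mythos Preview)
The paper does not prove this theorem; it is quoted verbatim from Lions--Toscani \cite{L.T} (Propositions~2.1 and~2.3 there) and used as a black box. So there is no ``paper's own proof'' to compare against.

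Your sketch is along the correct lines and is essentially the strategy of \cite{L.T}: the Duhamel formula along characteristics, the convex-combination identity $e^{-a(t,x)}+\int_0^t k\,e^{-(a(t,x)-a(s,\cdot))}\,ds=1$ yielding the invariant region, and a fixed-point argument on that region. Your diagnosis of the circularity in part~(ii) and the cure via iteration inside the slab $\{\underline m\le u,v\le\gamma\}$ is also the right idea.

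One point to flag: your uniqueness argument and your contraction both invoke a \emph{local Lipschitz dependence of $k$ on $(u,v)$}, but Definition~\ref{def} only asks $k$ to be measurable with the stated growth bounds; no regularity in $(u,v)$ is assumed. You acknowledge this and defer to ``Lipschitz truncations of $k$ and a limiting argument, as in~\cite{L.T}'', which is honest, but note that this is genuinely where the work lies: for merely measurable $k$ the map $(u,v)\mapsto k(u,v,x)(v-u)$ need not be Lipschitz, and the passage to the limit after truncation requires some compactness (e.g.\ averaging-lemma-type arguments or monotonicity) that you have not supplied. If you want a self-contained proof at the generality of Definition~\ref{def}, this step must be filled in; otherwise your sketch is a correct proof under the additional hypothesis that $k$ is locally Lipschitz in $(u,v)$.
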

We study the long time behavior of the weak solution obtained in the above theorem. Our first result is the following.
\begin{theorem}\label{main}
Let $k(u,v,x)$ be an interaction rate of type 3. Let $u,v\in L^{\infty}([0,\infty)\times \mathbb{T})$ be the weak solution of \eqref{1Eq} with the initial data $\delta \leq u_0, v_0\in L^{\infty}(\mathbb{T})$ for some $\delta >0.$ Let $M\colonequals \max\{||u_0||_{L^{\infty}},\,||v_0||_{L^{\infty}}\}$ and  $m_{\infty}\colonequals\frac{1}{2}\int_{\mathbb{T}}(u_0+v_0)dx.$ Then there exist positive constants $\Lambda=\Lambda (M, m_{\infty})$ and  $\lambda=\lambda(\delta, M,m_{\infty})$ such that  
\begin{align*}
\int_{\mathbb{T}}\left({u(t,x)}-{m_{\infty}}\right)^2dx&+\int_{\mathbb{T}}\left({v(t,x)}-{m_{\infty}}\right)^2dx\\ \leq & \Lambda e^{-2\lambda t} \left[\int_{\mathbb{T}}\left({u_0(x)}-{m_{\infty}}\right)^2dx+\int_{\mathbb{T}}\left({v_0(x)}-{m_{\infty}}\right)^2dx
\right]
\end{align*}
holds for all $t\geq 0.$
\end{theorem}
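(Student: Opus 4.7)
My plan is to work with the macroscopic system \eqref{1Eq0} in the centered variables $\tilde\rho\colonequals \rho-2m_\infty$ and $j\colonequals u-v$, and build a hypocoercive Lyapunov functional that is equivalent to $\int_{\mathbb{T}}(u-m_\infty)^2\,dx + \int_{\mathbb{T}}(v-m_\infty)^2\,dx = \tfrac12\int_{\mathbb{T}}(\tilde\rho^2+j^2)\,dx$ and dissipates exponentially. A preliminary step is to propagate the pointwise bounds $\delta\le u(t,x), v(t,x)\le M$ in time, which follows from a standard comparison argument along the characteristics of \eqref{1Eq} where the right-hand side is a pure relaxation. Combined with the type~3 assumption, this yields uniform two-sided bounds $\kappa_3 \le k(u(t,x),v(t,x),x)\le \kappa_4$ on $[0,\infty)\times\mathbb{T}$, so that the nonlinear $k$ can be treated as a bounded weight with a strictly positive lower bound.

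A direct computation from \eqref{1Eq0} and integration by parts on $\mathbb{T}$ gives
\begin{equation*}
\frac{d}{dt}\int_{\mathbb{T}}(\tilde\rho^2+j^2)\,dx = -4\int_{\mathbb{T}} k j^2\,dx,
\end{equation*}
so the $L^2$ energy dissipates only in the momentum direction---the classical hypocoercivity obstruction. To create artificial dissipation in $\tilde\rho$, I would introduce $\Phi$, the zero-mean antiderivative of $\tilde\rho$ on $\mathbb{T}$ (well-defined since $\int_{\mathbb{T}}\tilde\rho\,dx = 0$ by mass conservation), and consider the modified functional
\begin{equation*}
H(t)\colonequals \int_{\mathbb{T}}(\tilde\rho^2+j^2)\,dx - 2\varepsilon\int_{\mathbb{T}}\Phi\, j\,dx,
\end{equation*}
with a small parameter $\varepsilon>0$. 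Using $\partial_t\Phi = -cj+c\bar j(t)$ (forced by $\partial_t\tilde\rho+c\partial_x j = 0$ together with the normalization of $\Phi$) and the identity $\int_{\mathbb{T}}\Phi\,\partial_x\tilde\rho\,dx = -\int_{\mathbb{T}}\tilde\rho^2\,dx$, the derivative of the cross term produces the desired negative contribution $-2\varepsilon c\int_{\mathbb{T}}\tilde\rho^2\,dx$, at the cost of an additional $+2\varepsilon c\int_{\mathbb{T}} j^2\,dx$ and a nonlinear remainder $4\varepsilon\int_{\mathbb{T}} k\,\Phi\, j\,dx$.

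The remainder is controlled by Young's inequality combined with the Poincar\'e inequality $\int_{\mathbb{T}}\Phi^2\,dx\le (4\pi^2)^{-1}\int_{\mathbb{T}}\tilde\rho^2\,dx$ and the bound $k\le\kappa_4$, while the spurious $j^2$ contributions are absorbed into $-4\int_{\mathbb{T}} kj^2\,dx$ using $k\ge\kappa_3$. Choosing $\varepsilon$ small enough, explicitly in terms of $c$, $\kappa_3$ and $\kappa_4$, yields $\tfrac{d}{dt}H(t) \le -C_1\int_{\mathbb{T}}(\tilde\rho^2+j^2)\,dx$ for an explicit $C_1>0$; a further application of Young's and Poincar\'e's inequalities shows that for $\varepsilon$ sufficiently small, $H$ and $\int_{\mathbb{T}}(\tilde\rho^2+j^2)\,dx$ are equivalent. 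Gr\"onwall then integrates to the claimed exponential bound, with constants tracked from the Poincar\'e constant and from $\kappa_3,\kappa_4$. The main obstacle I anticipate is the nonlinear dependence of $k$ on $(u,v)$: the hypocoercive correction closes uniformly in time only if $k$ admits uniform two-sided bounds, which is why preserving the strict positive lower bound of $u,v$ is essential and why the rate $\lambda$ must depend on $\delta$.
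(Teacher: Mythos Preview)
Your approach is correct and closes. The hypocoercive correction you use is in fact identical to the paper's: your zero-mean antiderivative $\Phi$ of $\tilde\rho$ satisfies $\Phi=-\partial_x\phi$ where $\phi$ solves the Poisson equation $-\partial^2_{xx}\phi=\tilde\rho$ on $\mathbb{T}$, so $-\int_{\mathbb{T}}\Phi j\,dx=\int_{\mathbb{T}}j\,\partial_x\phi\,dx$, which is exactly the cross term the paper adds. The genuine difference is the choice of \emph{leading} functional: the paper takes the Boltzmann entropy $\mathrm{H}[u,v]=\int_{\mathbb{T}}\big(u\log(u/m_\infty)-u+m_\infty\big)+\big(v\log(v/m_\infty)-v+m_\infty\big)\,dx$ and first proves that $\mathrm{H}$ is equivalent to the $L^2$ distance to equilibrium (this costs two separate lemmas), whereas you go straight for $\int_{\mathbb{T}}(\tilde\rho^2+j^2)\,dx$. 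For the present theorem your route is shorter and avoids the entropy--$L^2$ comparison entirely, because the uniform bounds $\kappa_3\le k\le\kappa_4$ make the quadratic energy already dissipative in $j$. The paper's entropy choice pays off in the companion result (Theorem~\ref{main1}) where $k$ may vanish: there the dissipation of $\mathrm{H}$ produces a factor $(\log u-\log v)(u-v)\ge(u-v)^2/\max\{u,v\}$, which combines with $k\ge k_1(x)(u+v)^\alpha$ to yield a \emph{uniform} lower bound on the dissipation even without a lower bound on $u,v$; the plain $L^2$ functional does not offer this cancellation. One technical point you should flag when writing the proof in full: since $u,v$ are only weak solutions, the time derivatives of the quadratic terms and of the cross term need justification (the paper does this via mollification and a Lions--Magenes lemma); your formal computation is correct but this step should be made explicit.
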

Theorem \ref{main} holds for a large class of interaction rates $k$, for example,  $k(u,v,x)=k_1(x)(u+v)^{\alpha}$  for  $\alpha\in \mathbb{R}$ and  $k_1\in L^{\infty}(\mathbb{T})$ with $\mathrm{ess\,inf}_{x\in \mathbb{T}}k_1(x)>0.$ The lower bound on the initial data in  Theorem \ref{main} is required because the interaction rate $k(u,v,x)$ can have singularity when $u=v=0,$ for example,  as above $k(u,v,x)=k_1(x)(u+v)^{\alpha}$ with   $\alpha<0.$
 Our next result shows that this lower bound on the initial data can be removed under a lower growth assumption  on $k.$
\begin{theorem}\label{main1}
Let $k(u,v,x)$ be an interaction rate of type 1. Assume $k(u,v,x)\geq k_1(x)(u+v)^{\alpha}$ holds  for some $\alpha\in [0,1]$ and $k_1\in L^{\infty}(\mathbb{T})$ with $\mathrm{ess\,inf}_{x\in \mathbb{T} } k_1(x)>0.$   Let $u, v \in L^{\infty}([0,\infty)\times \mathbb{T})$ be the weak solution of \eqref{1Eq} with the initial data $0 \leq u_0, v_0\in L^{\infty}(\mathbb{T}).$  Let $M\colonequals \max\{||u_0||_{L^{\infty}},\,||v_0||_{L^{\infty}}\}$ and  $m_{\infty}\colonequals\frac{1}{2}\int_{\mathbb{T}}(u_0+v_0)dx>0.$  Then there exist positive constants $\Lambda_{\alpha}=\Lambda_{\alpha} (M, m_{\infty})$ and  $\lambda_{\alpha}=\lambda_{\alpha}( M, m_{\infty}, \mathrm{ess\,inf}_{x\in \mathbb{T} } k_1(x))$ such that  
\begin{align*}
\int_{\mathbb{T}}\left({u(t,x)}-{m_{\infty}}\right)^2dx&+\int_{\mathbb{T}}\left({v(t,x)}-{m_{\infty}}\right)^2dx\\ \leq & \Lambda_{\alpha} e^{-2\lambda_{\alpha} t} \left[\int_{\mathbb{T}}\left({u_0(x)}-{m_{\infty}}\right)^2dx+\int_{\mathbb{T}}\left({v_0(x)}-{m_{\infty}}\right)^2dx
\right]
\end{align*}
holds for all $t\geq 0.$
\end{theorem}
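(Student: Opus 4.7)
The plan is to replace the $L^2$-type Lyapunov functional used to prove Theorem \ref{main} by Boltzmann's relative entropy
\[
H[u,v] := \int_\mathbb{T}\Big[m_\infty\phi(u/m_\infty)+m_\infty\phi(v/m_\infty)\Big]\,dx,\qquad \phi(s) := s\log s - s + 1,
\]
because the logarithmic weight that appears in its dissipation compensates exactly for the degeneracy of $k$ allowed by the hypothesis $k\geq k_1(x)(u+v)^\alpha$ with $\alpha\in[0,1]$. Taking $\varphi(s)=s\log s-s$ in \eqref{d/dt} yields $\tfrac{d}{dt}H=-\int_\mathbb{T} k(u-v)(\log u-\log v)\,dx$. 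The elementary pointwise inequality $(u-v)(\log u-\log v)\geq (u-v)^2/\max\{u,v\}\geq (u-v)^2/(u+v)$, combined with $k\geq k_*(u+v)^\alpha$ where $k_* := \mathrm{ess\,inf}_{x\in\mathbb{T}}k_1(x)>0$, and with $u+v\leq 2M$ from Theorem \ref{exist}(i), gives
\[
-\tfrac{d}{dt}H \geq k_*(2M)^{\alpha-1}\int_\mathbb{T}(u-v)^2\,dx
\]
since $\alpha-1\leq 0$. Crucially, this linear dissipation requires no pointwise lower bound on $u,v$.

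This estimate controls only the microscopic part $j:=u-v$, so to close a Gr\"onwall inequality I would add the classical hypocoercive corrector. Let $\Phi$ be the mean-zero antiderivative on $\mathbb{T}$ of $\rho-2m_\infty$ (with $\rho:=u+v$) and consider $\mathcal{L}:=H-\epsilon\int_\mathbb{T}\Phi j\,dx$. Computing $\tfrac{d}{dt}\int_\mathbb{T}\Phi j$ through the macroscopic system \eqref{1Eq0} and using the Poincar\'e bound $\|\Phi\|_{L^2}\leq C\|\rho-2m_\infty\|_{L^2}$, the type-1 estimate $k\leq \kappa_1(M)$, and Young's inequality, one checks that for $\epsilon>0$ small enough (depending only on $M,m_\infty,k_*,\alpha$) the functional $\mathcal{L}$ is comparable to $H$ and satisfies $\tfrac{d}{dt}\mathcal{L}\leq -\lambda(\|j\|_{L^2}^2+\|\rho-2m_\infty\|_{L^2}^2)$. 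A Csisz\'ar--Kullback--Pinsker-type equivalence $H\asymp \|u-m_\infty\|_{L^2}^2 + \|v-m_\infty\|_{L^2}^2$, valid on the bounded range $u,v\in[0,M]$ because $\phi(s)/(s-1)^2$ is continuous and positive on $[0,M/m_\infty]$, converts this into $\tfrac{d}{dt}\mathcal{L}\leq -\lambda_\alpha\mathcal{L}$ and hence the claimed exponential $L^2$ decay.

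The main obstacle is the rigorous justification of the entropy identity when $u_0$ or $v_0$ may vanish, since $\log u$ is then a priori singular and $\phi$ fails to be $C^1$ at $0$. I would handle this by an approximation: set $u_0^\varepsilon := u_0+\varepsilon$, $v_0^\varepsilon := v_0+\varepsilon$ and apply Theorem \ref{exist}(i) to obtain weak solutions $u^\varepsilon,v^\varepsilon\leq M+\varepsilon$. A Duhamel representation along the characteristics $x\mp ct$, using $0\leq k\leq \kappa_1$, yields the pointwise bound $u^\varepsilon(t,\cdot),v^\varepsilon(t,\cdot)\geq \varepsilon e^{-\kappa_1 t}$, which makes the entropy calculation classical for each $\varepsilon>0$. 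Every constant in the Gr\"onwall estimate above depends only on $M+\varepsilon$, $m_\infty+\varepsilon$, $k_*$, and $\alpha$, not on the time-dependent lower bound $\varepsilon e^{-\kappa_1 t}$, so passing $\varepsilon\to 0$ through the uniqueness/stability part of Theorem \ref{exist}(i) and lower semicontinuity of $H$ in $L^1$ transfers the inequality to the original weak solution, giving constants $\Lambda_\alpha,\lambda_\alpha$ with the stated dependence.
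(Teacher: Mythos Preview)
Your proposal is correct and follows essentially the same route as the paper: Boltzmann's relative entropy plus the same hypocoercive corrector (your antiderivative $\Phi$ is $-\partial_x\phi$ for the paper's Poisson solution $\phi$), the same key dissipation inequality $k(u-v)(\log u-\log v)\geq k_*(2M)^{\alpha-1}(u-v)^2$, the same $L^2$--entropy equivalence on $[0,M]$ (Lemma~\ref{Heqv}), and the same $\varepsilon$-lifting of the initial data to justify the entropy computation. The only cosmetic differences are that the paper takes $\max\{u_0,\varepsilon\}$ rather than $u_0+\varepsilon$, invokes the minimum principle instead of your Duhamel lower bound, and passes $\varepsilon\to 0$ already at the level of the integrated dissipation inequality (Lemma~\ref{a.e.H}(ii)) before assembling the Gr\"onwall argument on the limiting solution.
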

In Theorem \ref{main1}  the interaction rate $k(u,v,x) $ can be \emph{degenerate} in the sense that it can vanish at the points where $u=v=0.$ 
For example, Theorem \ref{main1} holds for $k(u,v,x)=k_1(x)(u+v)^{\alpha}$ with $\alpha\in [0,1],$  $k_1\in L^{\infty}(\mathbb{T}),$  $\mathrm{ess\,inf}_{x\in \mathbb{T} } k_1(x)>0.$ This case includes the  the Goldstein-Taylor equation if $\alpha=0$ and the Carlemann equation if $\alpha=1$.
\bigskip
 
Next, we study the three dimensional equation \eqref{2Eq}. 
Existence and uniqueness of global weak solutions can obtained as in Theorem \ref{exist}; we refer \cite{L.T} for more details. 
\begin{theorem}\label{exist3} Let $0\leq u_{0,i}, v_{0,i}\in L^{\infty}(\mathbb{T}^3)$ for $i\in \{1,2,3\}.$
\begin{itemize}
\item[(i)] Let $k$ be an  interaction rate of type 1. Then, there exists a unique global weak solution $u_i,v_i\in L^{\infty}([0,T)\times \mathbb{T}^3)\cap C([0, T];L^p(\mathbb{T}^3))$ for all $T>0,$ $p\in [1, \infty).$  In addition, the following bound holds
\begin{equation}\label{max3}
\max_{i\in \{1,2,3\}}\big\{||u_i(t)||_{L^{\infty}(\mathbb{T}^3)}, ||v_i(t)||_{L^{\infty}(\mathbb{T}^3)}\big\}\leq \max_{i\in \{1,2,3\}}\big\{||u_{0,i}||_{L^{\infty}(\mathbb{T}^3)}, ||v_{0,i}||_{L^{\infty}(\mathbb{T}^3)}\big\},\,\,\,\,\forall\,t\geq 0.
\end{equation}
\item[(ii)] Let $k$ be an  interaction rate of type 2. Assume that there is $\delta>0$ such that $u_{0,i}, v_{0,i}\geq \delta$ on $\mathbb{T}^3$ for all $i\in \{1,2,3\}.$ 
 Then, there exists a unique global weak solution $u_i,v_i\in L^{\infty}([0,T)\times \mathbb{T}^3)\cap C([0, T];L^p(\mathbb{T}^3))$ for all $T>0,$ $p\in [1, \infty).$ In addition, this solution satisfies the bound \eqref{max3} and  
\begin{equation}\label{min3} \mathrm{ess\,inf}_{i\in \{1,2,3\}, x\in \mathbb{T}^3
}\big\{u_i(t,x),v_i(t, x)\big\}\geq \mathrm{ess\,inf}_{i\in \{1,2,3\}, x\in \mathbb{T}}\big\{u_{0,i}(x),v_{0,i}(x)\big\},\,\,\,\,\forall\, t\geq 0
.\end{equation}
\end{itemize} 
\end{theorem}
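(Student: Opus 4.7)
The plan is to adapt the proof of Theorem \ref{exist} from \cite{L.T} to the six coupled transport equations in \eqref{2Eq}. The transport part is linear along the coordinate directions $\pm c\mathbf{e}_i$, so each unknown admits a Duhamel representation along straight-line characteristics, and the nonlinearity enters only through the rate $k$ and the shared mass density $\rho=\sum_{i=1}^3(u_i+v_i)$. The argument decomposes into (a) $L^\infty$ (and, for type 2, pointwise-from-below) bounds via a maximum principle that exploits the structure of the reaction term $k(\rho-6u_i)$, (b) existence by Picard iteration once $k$ is controlled on the admissible range of values, and (c) uniqueness by a Gronwall estimate on the difference of two solutions.

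For part (i), the key observation is that at any point where $u_i$ attains $M(t):=\max_j\{\|u_j(t)\|_\infty,\|v_j(t)\|_\infty\}$ one has $\rho\leq 6M(t)$, so the reaction term $k(\rho-6u_i)$ is non-positive; an analogous bound holds for $v_i$. This yields \eqref{max3} and confines the arguments of $k$ to the compact set $[0,M(0)]^6$, on which the type 1 assumption gives $k\leq\kappa_1(M(0))$. I would then extend $k$ outside this set by a bounded continuous function, solve the extended system by a contraction mapping on $C([0,T];L^\infty(\mathbb{T}^3)^6)$ for small $T$ using the mild formulation along characteristics, and verify a posteriori that the $L^\infty$ and non-negativity bounds (the latter following by comparison with $\dot w=-6kw$ along each characteristic) render the extension irrelevant; iterating in $T$ gives a global solution. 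Uniqueness follows by subtracting two solutions and applying Gronwall in $L^1(\mathbb{T}^3)$ to $\sum_i(|u_i-\tilde u_i|+|v_i-\tilde v_i|)$, using the local boundedness of $k$ on the range of values reached. For part (ii), the singularity of $k$ at the origin prevents a direct truncation, so I would first propagate the lower bound: at a point where $u_i$ equals $m(t):=\mathrm{ess\,inf}_{j,x}\{u_j,v_j\}$ one has $\rho\geq 6u_i$, hence $\rho-6u_i\geq 0$ and $u_i$ is non-decreasing along the characteristic through that point, giving \eqref{min3}. Knowing that the solution is trapped in $[\delta,M(0)]^6$, I modify $k$ off this compact set to produce a type 1 rate, apply part (i), and note that the modification is never activated, so the constructed function is a weak solution of \eqref{2Eq} with the original $k$; uniqueness again follows from Gronwall on the now compact range.

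The principal technical obstacle I anticipate is making the pointwise maximum and minimum arguments rigorous at the level of weak solutions, since characteristic reasoning carries direct pointwise information only in the classical setting. This can be handled either by mollifying the initial data, proving \eqref{max3} and \eqref{min3} for the resulting smooth solutions, and passing to the limit using uniform $L^\infty$ bounds together with the stability of the Duhamel formulation; or more directly via a Stampacchia/Kato-type argument that tests the equation for $u_i$ against $(u_i-M(0))_+$ and $(\delta-u_i)_+$ and exploits the sign of $\rho-6u_i$ on the corresponding super- and sub-level sets. Apart from this point, the extension from one to three spatial dimensions is routine and mirrors the proof in \cite{L.T} line by line.
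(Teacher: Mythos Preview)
Your proposal is correct and matches what the paper does: the paper does not prove Theorem~\ref{exist3} at all but simply states that ``existence and uniqueness of global weak solutions can [be] obtained as in Theorem~\ref{exist}; we refer \cite{L.T} for more details,'' and your outline is precisely the adaptation of the Lions--Toscani argument to the six-component system that this remark points to. The maximum/minimum principle based on the sign of $\rho-6u_i$, the truncation of $k$ to reduce type~2 to type~1, and the Picard/Gronwall scheme are the ingredients of \cite{L.T}, so there is nothing to compare.
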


Our next result is the generalization of Theorem \ref{main} to the three dimensional equation.
\begin{theorem}\label{main3}
Let $k(u_1,u_2,u_3,v_1,v_2,v_3,x)$ be an interaction rate of type 3. Let $u_i,v_i\in L^{\infty}([0,\infty)\times \mathbb{T}^3),$ $i\in\{1,2,3\}$ be the weak solution of \eqref{2Eq} with the initial data $\delta \leq u_{0,i}, v_{0,i}\in L^{\infty}(\mathbb{T}^3)$ for some $\delta >0.$  Let $\displaystyle M\colonequals \max_{i\in\{1,2,3\}}\{||u_{0,i}||_{L^{\infty}},\,||v_{0,i}||_{L^{\infty}}\}$ and  
$m_{\infty}=\frac{1}{6}\int_{\mathbb{T}^3}\sum_{i=1}^3(u_{i,0}(x)+v_{i,0}(x))dx.$  Then there exist positive constants $\tilde{\Lambda}=\tilde{\Lambda} ( M, m_{\infty})$ and  $\tilde{\lambda}=\tilde{\lambda}(\delta, M)$ such that  
\begin{align*}
\sum_{i=1}^3\int_{\mathbb{T}^3}\left({u_i(t,x)}-{m_{\infty}}\right)^2dx&+ \sum_{i=1}^3\int_{\mathbb{T}^3}\left({v_i(t,x)}-{m_{\infty}}\right)^2dx \\ \leq & \tilde{\Lambda} e^{-2\tilde{\lambda} t} \left[ \sum_{i=1}^3\int_{\mathbb{T}^3}\left({u_{0,i}(x)}-{m_{\infty}}\right)^2dx+\sum_{i=1}^3\int_{\mathbb{T}^3}\left({v_{0,i}(x)}-{m_{\infty}}\right)^2dx
\right]
\end{align*}
holds for all $t\geq 0.$
\end{theorem}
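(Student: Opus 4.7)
The plan is to mirror the construction of the modified-entropy Lyapunov functional used for Theorem \ref{main} and lift it to the three-dimensional setting, exploiting the natural coercivity of the collision operator together with a hypocoercive coupling produced by the transport terms.

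First, I would invoke Theorem \ref{exist3}(ii) to secure the uniform two-sided bound $\delta\leq u_i(t,x),v_i(t,x)\leq M$ for all $t\geq 0$, $x\in\mathbb{T}^3$ and $i\in\{1,2,3\}$. Since $k$ is of type 3, this gives constants $0<\kappa_3\leq k(u_1,\dots,v_3,x)\leq \kappa_4$ along the whole trajectory, so the nonlinear collision operator behaves like a linear one with bounded and positive coefficients depending only on $\delta$ and $M$. Setting $\tilde u_i:=u_i-m_\infty$, $\tilde v_i:=v_i-m_\infty$, conservation of mass from \eqref{2Eq0} gives $\int_{\mathbb{T}^3}\sum_i(\tilde u_i+\tilde v_i)\,dx=0$, making the Poincar\'e inequality on $\mathbb{T}^3$ available for the mean-zero quantities that will arise.

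The heart of the argument is a modified-entropy functional of the form
\begin{equation*}
\mathcal{L}(t):=\tfrac{1}{2}\sum_{i=1}^3\int_{\mathbb{T}^3}(\tilde u_i^2+\tilde v_i^2)\,dx+\varepsilon\sum_{i=1}^3\int_{\mathbb{T}^3}(\tilde u_i-\tilde v_i)\,\psi_i\,dx,
\end{equation*}
where, paralleling the one-dimensional construction, each auxiliary $\psi_i$ is obtained by inverting $\partial_{x_i}$ against the $x_i$-mean-zero part of $\tilde\rho_i=\tilde u_i+\tilde v_i$, so that the transport in the second line of \eqref{2Eq0} produces an $L^2$-coercive contribution on $\tilde\rho_i$. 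For $\varepsilon>0$ small enough (depending on $M,m_\infty$), Cauchy--Schwarz and Poincar\'e yield
\begin{equation*}
\tfrac{1}{4}\sum_{i=1}^3\int_{\mathbb{T}^3}(\tilde u_i^2+\tilde v_i^2)\,dx\;\leq\;\mathcal{L}(t)\;\leq\;\sum_{i=1}^3\int_{\mathbb{T}^3}(\tilde u_i^2+\tilde v_i^2)\,dx.
\end{equation*}
Differentiating $\mathcal{L}(t)$ along \eqref{2Eq}, the quadratic part produces the dissipation
\begin{equation*}
-\tfrac{1}{2}\int_{\mathbb{T}^3} k\Bigl[\sum_{i\neq j}(u_i-u_j)^2+\sum_{i\neq j}(v_i-v_j)^2+\sum_{i,j}(u_i-v_j)^2\Bigr]dx,
\end{equation*}
which already controls every non-hydrodynamic mode, while the $\varepsilon$-cross term, through $\partial_t\tilde j_i+c\partial_{x_i}\tilde\rho_i=-6k\tilde j_i$, contributes a coercive piece of order $-c\varepsilon\sum_i\int(\tilde\rho_i-\overline{\tilde\rho_i})^2\,dx$ modulo errors that are absorbed into the collision dissipation for $\varepsilon$ small. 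Combining these bounds with the elementary identities $\tilde u_i=\tfrac{1}{2}(\tilde\rho_i+\tilde j_i)$, $\tilde v_i=\tfrac{1}{2}(\tilde\rho_i-\tilde j_i)$, the Poincar\'e inequality on $\mathbb{T}^3$, and the zero-mean constraint on $\sum_i(\tilde u_i+\tilde v_i)$ closes the differential inequality $\frac{d}{dt}\mathcal{L}(t)\leq -2\tilde\lambda\,\mathcal{L}(t)$, and Gr\"onwall finishes the proof.

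The main obstacle is the anisotropy of the dissipation: the collision operator only sees \emph{differences} between the six species, and the transport couples $\tilde\rho_i$ and $\tilde j_i$ only through $\partial_{x_i}$, so the missing $L^2$-coercivity has to be recovered one direction at a time. The cross terms $\psi_i$ must therefore be chosen simultaneously so that each of them creates coercivity along its own axis while the off-diagonal errors they generate (involving $\tilde u_j-\tilde v_j$ for $j\neq i$) remain controllable by $\kappa_3$ times the collision dissipation. Quantifying this absorption determines the admissible size of $\varepsilon$ and, ultimately, the explicit dependence of $\tilde\lambda$ on $\delta$ and $M$.
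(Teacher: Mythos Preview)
Your strategy is sound and would close, but it differs from the paper's in two notable places.

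First, the paper does not use the quadratic energy $\tfrac12\sum_i\int(\tilde u_i^2+\tilde v_i^2)$ as the backbone of the Lyapunov functional; it keeps the Boltzmann relative entropy $\sum_i\mathrm H[u_i,v_i]$ and only passes to $L^2$ at the very end via the two-sided equivalence from the pointwise bounds $\delta\le u_i,v_i\le M$. Under those bounds your quadratic choice is just as good, so this is a cosmetic difference.

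The more substantive difference is the auxiliary potential. You invert $\partial_{x_i}$ direction by direction, producing three $\psi_i$ and a coercive piece $-c\varepsilon\sum_i\|\tilde\rho_i-\overline{\tilde\rho_i}^{\,x_i}\|^2$ that only sees the oscillation of $\rho_i$ in its own variable. The paper instead solves a single three-dimensional Poisson equation $-\Delta_x\phi=\rho-6m_\infty$ and uses the correction $\varepsilon\int_{\mathbb T^3} j\cdot\nabla_x\phi$. After writing $\partial_t j=-\tfrac{c}{3}\nabla_x\rho-6kj+\text{(terms in }\rho_i-\rho/3\text{)}$, the cross term yields $-\tfrac{c}{3}\|\rho-6m_\infty\|^2$ directly, and the residual anisotropic pieces are absorbed into the collision dissipation via $H^2$-elliptic regularity. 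So the paper trades your ``combine three directional gains with the collision term'' step for an ``absorb the $\rho_i-\rho/3$ remainder'' step; either works, but the Poisson route makes the coercivity on the hydrodynamic mode $\rho-6m_\infty$ appear in one shot, while your route still owes a short algebraic lemma (e.g.\ by Fourier) showing that $\sum_i\|\tilde\rho_i-\tilde\rho/3\|^2+\sum_i\|\tilde\rho_i-\overline{\tilde\rho_i}^{\,x_i}\|^2$ controls $\sum_i\|\tilde\rho_i\|^2$ on the zero-mean subspace. You allude to this in your last paragraph, and it is indeed the only nontrivial point left in your scheme.
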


 Similar to Theorem \ref{main1} we show that the lower bound on the initial data in Theorem \ref{main3} can be removed  under a lower growth assumption on  the interaction rate $k.$ 

\begin{theorem}\label{main13}
Let $k(u_1,u_2,u_3,v_1,v_2,v_3,x)$ be an interaction rate of type 1. Assume $$k(u_1,u_2,u_3,v_1,v_2,v_3,x)\geq k_1(x)
 (u_1+u_2+u_3+v_1+v_2+v_3)^{\alpha}$$ holds  for some $\alpha\in [0,1]$ and $k_1\in L^{\infty}(\mathbb{T}^3)$ with $\mathrm{ess\,inf}_{x\in \mathbb{T}^3 } k_1(x)>0.$   Let $u_i, v_i \in L^{\infty}([0,\infty)\times \mathbb{T}^3),$ $i\in\{1,2,3\}$ be the weak solution of \eqref{2Eq} with the initial data $0 \leq u_{0,i}, v_{0,i}\in L^{\infty}(\mathbb{T}^3).$  Let $\displaystyle M\colonequals \max_{i\in\{1,2,3\}}\{||u_{0,i}||_{L^{\infty}},\,||v_{0,i}||_{L^{\infty}}\}$ and $m_{\infty}=\frac{1}{6}\int_{\mathbb{T}^3}\sum_{i=1}^3(u_{i,0}(x)+v_{i,0}(x))dx>0.$  Then there exist positive constants $\tilde{\Lambda}_{\alpha}=\tilde{\Lambda}_{\alpha} (M, m_{\infty})$ and  $\tilde{\lambda}_{\alpha}=\tilde{\lambda}_{\alpha}( M, m_{\infty}, \mathrm{ess\,inf}_{x\in \mathbb{T}^3 } k_1(x))$ such that  
\begin{align*}
\sum_{i=1}^3\int_{\mathbb{T}^3}\left({u_i(t,x)}-{m_{\infty}}\right)^2dx&+ \sum_{i=1}^3\int_{\mathbb{T}^3}\left({v_i(t,x)}-{m_{\infty}}\right)^2dx \\ \leq & \tilde{\Lambda}_{\alpha} e^{-2\tilde{\lambda}_{\alpha} t} \left[ \sum_{i=1}^3\int_{\mathbb{T}^3}\left({u_{0,i}(x)}-{m_{\infty}}\right)^2dx+\sum_{i=1}^3\int_{\mathbb{T}^3}\left({v_{0,i}(x)}-{m_{\infty}}\right)^2dx
\right]
\end{align*}
holds for all $t\geq 0.$
\end{theorem}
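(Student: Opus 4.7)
The plan is to extend the scheme of Theorem \ref{main1} to three dimensions, reusing the Lyapunov functional that will have been built for Theorem \ref{main3}. By the type-1 hypothesis and Theorem \ref{exist3}(i), one has the uniform bound $0\le u_i(t,x),v_i(t,x)\le M$ for all $t\ge 0$, hence $\rho(t,x)\le 6M$; combined with the mass conservation $\int_{\mathbb{T}^3}\rho\,dx=6m_\infty>0$, these are the only quantitative ingredients available to compensate for the possible vanishing of $k$ on the zero set of $\rho$.

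Let $\tilde E$ denote the Lyapunov functional constructed in the proof of Theorem \ref{main3}, equivalent to $\sum_{i=1}^{3}\bigl(\|u_i-m_\infty\|_{L^2}^2+\|v_i-m_\infty\|_{L^2}^2\bigr)$. Differentiating $\tilde E$ along \eqref{2Eq} produces, up to transport contributions controlled by the $x_i$-derivative cross-terms in $\tilde E$, a dissipation of the schematic form $-\int_{\mathbb{T}^3}k\,Q(u_1,\dots,v_3)\,dx$, where $Q$ is a nonnegative quadratic form in the microscopic differences $u_i-u_j$ and $u_i-v_j$ (the expression already appearing on the right-hand side of \eqref{d/dt3}). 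Using $k\ge k_1(x)\rho^{\alpha}$ together with $\kappa_0\colonequals\mathrm{ess\,inf}_{x}k_1(x)>0$, the problem reduces to the weighted coercivity inequality
\begin{equation*}
\int_{\mathbb{T}^3}\rho^{\alpha}\,Q(u_1,\dots,v_3)\,dx\ \ge\ C(M,m_\infty)\sum_{i=1}^{3}\Bigl(\|u_i-m_\infty\|_{L^2}^2+\|v_i-m_\infty\|_{L^2}^2\Bigr),
\end{equation*}
possibly after tuning the weights in $\tilde E$ so that any lower-order remainders produced by the transport terms can be absorbed.

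The decisive step, and the main obstacle, is establishing this coercivity in the presence of the degenerate weight $\rho^{\alpha}$. For $\alpha=0$ the inequality is exactly what was proved on the way to Theorem \ref{main3}. For $\alpha=1$, the mass conservation $\int_{\mathbb{T}^3}\rho\,dx=6m_\infty$ together with the upper bound $\rho\le 6M$ prevents $\rho$ from being small on a set large enough to conceal the $L^2$ deviation of the $u_i,v_i$ from $m_\infty$; one makes this quantitative by a Chebyshev-type argument isolating a sublevel set $\{\rho\ge c(M,m_\infty)\}$ of controlled measure, applying the nondegenerate estimate there, and dealing with the complement using the uniform bound $M$ and the fact that $u_i+v_j\le\rho$ is itself small on that set. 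For intermediate $\alpha\in(0,1)$, Hölder interpolation against $\rho\le 6M$ (writing $\rho^{\alpha}=\rho\cdot\rho^{\alpha-1}\ge(6M)^{\alpha-1}\rho$) bridges the two endpoints while keeping the dependence of the constants on $M$ and $m_\infty$ explicit.

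Once the coercivity is in hand, the differential inequality $\tfrac{d}{dt}\tilde E\le -2\tilde\lambda_\alpha\tilde E$ follows, with $\tilde\lambda_\alpha$ depending only on $M$, $m_\infty$ and $\kappa_0$; Grönwall's lemma together with the equivalence between $\tilde E$ and the squared $L^2$ deviation then yields the stated decay with an explicit constant $\tilde\Lambda_\alpha=\tilde\Lambda_\alpha(M,m_\infty)$. All remaining details are a routine three-dimensional transcription of the argument for Theorem \ref{main1}.
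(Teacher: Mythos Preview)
Your proposal has a genuine gap at what you yourself identify as the decisive step. The displayed weighted coercivity inequality
\[
\int_{\mathbb{T}^3}\rho^{\alpha}\,Q(u_1,\dots,v_3)\,dx\ \ge\ C(M,m_\infty)\sum_{i=1}^{3}\Bigl(\|u_i-m_\infty\|_{L^2}^2+\|v_i-m_\infty\|_{L^2}^2\Bigr)
\]
is simply false: $Q$ vanishes identically on every local equilibrium $u_i=v_i=\rho/6$, whereas the right-hand side is positive unless $\rho\equiv 6m_\infty$. Even after adjoining the macroscopic term $\int(\rho-6m_\infty)^2\,dx$ coming from the cross term, your Chebyshev/level-set scheme still has to absorb the \emph{unweighted} remainder $C\varepsilon\int\sum_{i,j}(u_i-v_j)^2\,dx$ produced by the transport part (cf.\ \eqref{dtj<sum}) using only the \emph{weighted} dissipation $\kappa_0\int\rho^{\alpha}Q\,dx$; on the bad set $\{\rho<c\}$ the weight is small and nothing in your outline explains how this absorption is carried out.

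The paper bypasses the whole level-set analysis by exploiting the specific \emph{logarithmic} structure of the Boltzmann entropy, which you have overlooked by calling $Q$ ``a nonnegative quadratic form''. For $\varphi(u)=u\log(u/m_\infty)-u+m_\infty$ the dissipation contains terms $k(\log u_i-\log v_j)(u_i-v_j)$, and the mean value theorem gives
\[
k(\log u_i-\log v_j)(u_i-v_j)=\frac{k\,(u_i-v_j)^2}{\xi}\quad\text{with }\xi\le\max\{u_i,v_j\}\le\rho.
\]
Hence $k/\xi\ge k_1(x)\rho^{\alpha}/\rho=k_1(x)\rho^{\alpha-1}\ge(6M)^{\alpha-1}\kappa_0$ \emph{pointwise}, since $\alpha-1\le 0$ and $\rho\le 6M$. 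This converts the degenerate weighted dissipation directly into the unweighted bound $(6M)^{\alpha-1}\kappa_0\int\sum(u_i-v_j)^2\,dx$ (this is Lemma~\ref{a.e.H3}(ii)), after which the proof runs exactly as for Theorem~\ref{main3}. No Chebyshev argument, no splitting into good and bad sets, and no interpolation between endpoint cases is needed.
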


\section{One dimensional model}
 
\subsection*{3.1 Lyapunov functionals}

As we mentioned in the introduction, if $u,v$ is a classical solution to \eqref{1Eq}, then,   for any convex function $\varphi,$ the integral  
$ \int_{\mathbb{T}}\big[\varphi(u(t,x))+\varphi(v(t,x))\big]dx$ is non-increasing in $t\geq 0.$     We consider a specific case, i.e., $\varphi (u)=u\log{\left(\frac{u}{m_{\infty}}\right)}-{u}+{m_{\infty}}$ with continuous extension to 0. This corresponds to  Boltzmann's entropy
\begin{equation}\label{H}\mathrm{H}[u,v]\colonequals \int_{\mathbb{T}}\left(\frac{u}{m_{\infty}}\log{\left(\frac{u}{m_{\infty}}\right)}-\frac{u}{m_{\infty}}+1\right)m_{\infty}dx+\int_{\mathbb{T}}\left(\frac{v}{m_{\infty}}\log{\left(\frac{v}{m_{\infty}}\right)}-\frac{v}{m_{\infty}}+1\right)m_{\infty}dx
\end{equation}
defined for non-negative functions $u,v \in L^{\infty}( \mathbb{T}) $ with  $m_{\infty}\colonequals \frac{1}{2}\int_{\mathbb{T}}(u+v)dx>0.$ 
This functional is non-negative, which follows from the fact that  $ s\log{s}-s+1$ is non-negative for $s\in [0,\infty).$  We actually show that $\mathrm{H}$ is equivalent to the square of the $L^2$ norm. 
\begin{lemma}\label{Heqv}
Let $0 \leq u, v\in L^{\infty}(\mathbb{T}),$  $M\colonequals \max\{||u||_{L^{\infty}},\,||v||_{L^{\infty}}\},$ and $m_{\infty}\colonequals \frac{1}{2}\int_{\mathbb{T}}(u+v)dx>0.$  Then the following inequalities hold
\begin{equation}\label{M/m}
 \frac{1}{2M}\int_{\mathbb{T}}\big(({u}-{m_{\infty}})^2+({v}-{m_{\infty}})^2\big) dx\leq \mathrm{H}[u,v]\leq  \frac{1}{m_{\infty}}\int_{\mathbb{T}}\big(({u}-{m_{\infty}})^2+({v}-{m_{\infty}})^2\big) dx.
\end{equation}
\end{lemma}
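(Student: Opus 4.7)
The plan is to reduce the inequalities to pointwise bounds on the single-variable function
$$\phi(s) := s\log s - s + 1, \qquad s\geq 0,$$
extended by $\phi(0)=1$. Observe that with the substitution $s=u/m_\infty$ (and similarly $v/m_\infty$), the entropy may be rewritten as
$$\mathrm{H}[u,v] \;=\; m_\infty\!\int_{\mathbb{T}} \phi(u/m_\infty)\,dx \;+\; m_\infty\!\int_{\mathbb{T}}\phi(v/m_\infty)\,dx,$$
so that Lemma \ref{Heqv} is equivalent to establishing, for every $s\in[0,M/m_\infty]$,
$$\frac{m_\infty}{2M}(s-1)^2 \;\leq\; \phi(s)\;\leq\; (s-1)^2.$$
Note that the range is well-posed because $u,v\leq M$ pointwise and, by averaging, $m_\infty\leq M$.

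For the lower bound I would use Taylor's formula with integral remainder at $s=1$. Since $\phi(1)=\phi'(1)=0$ and $\phi''(r)=1/r$, one obtains
$$\phi(s) \;=\; \int_{1}^{s}\frac{s-r}{r}\,dr.$$
In either case $s\leq 1$ or $s\geq 1$, the integration variable $r$ lies between $s$ and $1$, so $r\leq \max(s,1)\leq M/m_\infty$, whence $1/r\geq m_\infty/M$. This gives
$$\phi(s) \;\geq\; \frac{m_\infty}{M}\int_{\min(s,1)}^{\max(s,1)}|s-r|\,dr \;=\; \frac{m_\infty}{2M}(s-1)^2,$$
which is exactly what is needed.

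For the upper bound I would instead analyze $g(s):=\phi(s)-(s-1)^2$ directly. One checks $g(0)=g(1)=0$, $g'(s)=\log s - 2(s-1)$ with $g'(1)=0$, and $g''(s)=1/s-2$, which changes sign at $s=1/2$. Tracking the sign of $g'$ (increasing on $(0,1/2)$ from $-\infty$ through a zero $s_1\in(0,1/2)$, then decreasing on $(1/2,\infty)$ through $g'(1)=0$ down to $-\infty$) yields that $g$ attains a local minimum at $s_1<1$ and a local maximum $g(1)=0$ before decreasing to $-\infty$; combined with $g(0)=0$, this gives $g\leq 0$ on $[0,\infty)$, i.e.\ $\phi(s)\leq (s-1)^2$. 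Finally, integrating both pointwise inequalities over $\mathbb{T}$ (and using $u,v\in[0,M]$) yields the stated bounds.

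The main delicate point is the upper bound, because $\phi(s)\leq (s-1)^2$ is saturated at $s=0$, where $\phi(0)=1=(0-1)^2$. Any naive Taylor-remainder estimate of the form $\phi(s)\leq \tfrac12 \phi''(\xi)(s-1)^2$ breaks down as $s\to 0^+$, since $1/\xi$ can be arbitrarily large; so monotonicity analysis (or an ad hoc decomposition of the integral $\int_s^1 (r-s)/r\,dr$) is needed rather than a uniform second-derivative bound. The lower bound, by contrast, is immediate from the representation above since $m_\infty\leq M$.
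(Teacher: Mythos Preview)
Your proof is correct and follows essentially the same route as the paper: both reduce to pointwise bounds on $\phi(s)=s\log s-s+1$, obtain the lower bound from Taylor expansion at $s=1$ using $\phi''(r)=1/r\ge m_\infty/M$ on the relevant interval, and obtain the upper bound by a sign analysis of $g'(s)=\log s-2(s-1)$ via the sign change of $g''(s)=1/s-2$ at $s=1/2$. The only cosmetic difference is that you use the integral remainder form while the paper uses the Lagrange (mean value) remainder for the lower bound; your explicit use of $g(0)=0$ at the boundary is a small clarification the paper leaves implicit.
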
 
\begin{proof}
Consider $f(u)\colonequals \frac{u}{m_{\infty}}\log{\left(\frac{u}{m_{\infty}}\right)}-\frac{u}{m_{\infty}}+1.$ We have  $$f'(u)=\frac{1}{m_{\infty}}\log{\left(\frac{u}{m_{\infty}}\right)}\,\,\,\,\, \text{and }\,\,\,  f''(u)=\frac{1}{m_{\infty}u}\,\,\, \text{for}\,\,\, u>0.$$ 
Since $f(m_{\infty})=f'(m_{\infty})=0,$ the mean value theorem implies that there exists $\theta\in [0,1]$ such that
$$f(u)=f(u)-f(m_{\infty})=\frac{(u-m_{\infty})^2}{2m_{\infty}(\theta u+(1-\theta)m_{\infty})}\geq  \frac{1}{2m_{\infty}M}\left({u}-{m_{\infty}}\right)^2.$$
This proves the inequality on the left hand side of \eqref{M/m}.

Next, we define $$g(u)\colonequals \frac{u}{m_{\infty}}\log{\left(\frac{u}{m_{\infty}}\right)}-\frac{u}{m_{\infty}}+1-\left(\frac{u}{m_{\infty}}-1\right)^2.$$ We have $$g'(u)=\frac{1}{m_{\infty}}\log{\left(\frac{u}{m_{\infty}}\right)}-\frac{2}{m_{\infty}} \left(\frac{u}{m_{\infty}}-1\right)\,\,\,\,\, \text{and }\,\,\,\,\, g''(u)=\frac{1}{m_{\infty}u}-\frac{2}{m^2_{\infty}}\,\,\, \text{for}\,\,\, u>0.$$
One can check that there exists $\bar{u}\in (0,m_{\infty})$ such that $g'(\bar{u})=0,$ $g'(u)<0  $ for $u\in (0,\bar{u}),$  $g'(u)>0$ for $u\in (\bar{u},m_{\infty}),$ and $g'(u)<0$ for $u>m_{\infty}.$ This fact and $g''(m_{\infty})=-\frac{1}{m^2_{\infty}}<0$ show that $g$ attains its  maximum  at $m_{\infty}.$ Therefore, we have
$$\frac{u}{m_{\infty}}\log{\left(\frac{u}{m_{\infty}}\right)}-\frac{u}{m_{\infty}}+1- \left(\frac{u}{m_{\infty}}-1\right)^2\leq g(m_{\infty})=0,\,\,\,\, \forall\, u\geq 0.$$
This proves the inequality on the right hand side of \eqref{M/m}.
\end{proof}
\begin{lemma}\label{a.e.H}
$(i)$ Let $k(u,v,x)$ be an interaction rate of type 2.  Let $u,v\in L^{\infty}([0,\infty)\times \mathbb{T})$ be the weak solution of \eqref{1Eq} with the initial data 
$\delta \leq u_0, v_0\in L^{\infty}(\mathbb{T})$ for some $\delta>0$. Let  $m_{\infty}\colonequals\frac{1}{2}\int_{\mathbb{T}}(u_0+v_0)dx.$ Then, for almost every $t\in (0,\infty),$ we have
\begin{equation*}
\frac{d}{dt}\mathrm{H}[u(t),v(t)]= -\int_{\mathbb{T}}k(u,v,x)\left(\log{u}-\log{v}\right)(u-v)dx\leq 0.
\end{equation*}
$(ii)$ Let $k(u,v,x)$ be an interaction rate of type 1. Assume $k(u,v,x)\geq k_1(x)(u+v)^{\alpha}$ holds  for some $\alpha\in [0,1]$ and $k_1\in L^{\infty}(\mathbb{T})$ with $\mathrm{ess\,inf}_{x\in \mathbb{T} } k_1(x)>0.$   Let $u,v\in L^{\infty}([0,\infty)\times \mathbb{T})$ be the weak solution of \eqref{1Eq} with the initial data 
$0 \leq u_0, v_0\in L^{\infty}(\mathbb{T}).$  Let  $M\colonequals \max\{||u_0||_{L^{\infty}},\,||v_0||_{L^{\infty}}\}$ and  $m_{\infty}\colonequals\frac{1}{2}\int_{\mathbb{T}}(u_0+v_0)dx>0.$ Then, for any $T>0,$ we have
\begin{equation*}
\mathrm{H}[u(T),v(T)]-\mathrm{H}[u_{0},v_{0}]\leq  - (2M)^{\alpha-1} \mathrm{ess\,inf}_{x\in \mathbb{T}}k_1(x)\int_{0}^{T}\int_{\mathbb{T}} (u-v)^2 dxdt.
\end{equation*}
\end{lemma}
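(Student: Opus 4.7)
\medskip

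My plan is to treat both parts together, since they share the same backbone: apply the entropy dissipation identity \eqref{d/dt} with $\varphi(u)=u\log(u/m_{\infty})-u+m_{\infty}$ (the convex function underlying $\mathrm{H}$), and then address the regularity issues specific to each case. The identity, derived formally in the introduction for classical solutions, is obtained by multiplying the first equation of \eqref{1Eq} by $\varphi'(u)=\log(u/m_{\infty})$ and the second by $\varphi'(v)=\log(v/m_{\infty})$, integrating over $\mathbb{T}$ so the transport term disappears by periodicity, and summing; the algebra gives exactly the dissipation $-\int_{\mathbb{T}}k(u,v,x)(\log u-\log v)(u-v)\,dx$.

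For part (i), the key point is that $k$ is of type 2 and $u_{0},v_{0}\geq \delta$, so by \eqref{min} the solution satisfies $\delta\leq u(t,x), v(t,x)\leq M$ almost everywhere. Hence $\log u$ and $\log v$ are bounded, and the nonlinear composition with $\varphi$ in the chain rule is legitimate: the source term $k(v-u)$ in \eqref{1Eq} lies in $L^{\infty}$, so the standard renormalization theorem for transport equations with bounded right-hand side gives $\partial_{t}\varphi(u)+c\partial_{x}\varphi(u)=\varphi'(u)\,k(v-u)$ in the weak sense. Integrating in $x$ and summing the $u$ and $v$ identities yields the stated equality; the sign follows from the monotonicity of $\log$.

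For part (ii), $u,v$ may touch $0$, so $\log u, \log v$ are not admissible test functions. I would regularize by $\varphi_{\epsilon}(u)=(u+\epsilon)\log((u+\epsilon)/m_{\infty})-(u+\epsilon)+m_{\infty}$, whose derivative $\varphi_{\epsilon}'(u)=\log((u+\epsilon)/m_{\infty})$ is bounded on $[0,M]$. Running the same chain-rule argument and integrating from $0$ to $T$ gives
\begin{equation*}
\int_{\mathbb{T}}\!\big[\varphi_{\epsilon}(u(T))+\varphi_{\epsilon}(v(T))\big]dx-\int_{\mathbb{T}}\!\big[\varphi_{\epsilon}(u_{0})+\varphi_{\epsilon}(v_{0})\big]dx=-\!\int_{0}^{T}\!\!\int_{\mathbb{T}}k(u,v,x)\big[\log(u+\epsilon)-\log(v+\epsilon)\big](u-v)\,dx\,dt.
\end{equation*}
On the left, $\varphi_{\epsilon}(s)\to\varphi(s)$ pointwise with a uniform bound on $[0,M]$, so dominated convergence returns $\mathrm{H}[u(T),v(T)]-\mathrm{H}[u_{0},v_{0}]$. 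For the right-hand side I use the elementary inequality $(\log a-\log b)(a-b)\geq (a-b)^{2}/\max(a,b)$ (which follows from $\log t\leq t-1$) with $a=u+\epsilon$, $b=v+\epsilon$, together with the growth assumption $k\geq k_{1}(x)(u+v)^{\alpha}$ and the bound $\max(u+\epsilon,v+\epsilon)\leq u+v+2\epsilon$. Taking $\epsilon\to 0^{+}$ via Fatou's lemma on the resulting non-negative integrand, and then using $(u+v)^{\alpha-1}\geq (2M)^{\alpha-1}$ (valid because $\alpha-1\leq 0$ and $u+v\leq 2M$, with the degenerate set $\{u+v=0\}$ automatically giving $(u-v)^{2}=0$), produces the claimed lower bound on the dissipation.

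The main technical obstacle is the rigorous chain rule for the weak solutions produced by Theorem \ref{exist}. In both parts this reduces to a transport equation with an $L^{\infty}$ right-hand side, so the renormalization property applies; the regularization in (ii) is harmless because all the approximate quantities are uniformly controlled on $[0,M]$ and the pass-to-the-limit only needs one-sided convergence (Fatou) on the dissipation side.
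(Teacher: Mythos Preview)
Your argument is correct but takes a genuinely different route from the paper in both parts.

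For part (i), the paper does not invoke an abstract renormalization theorem. Instead it mollifies the weak solution in $(t,x)$: it builds an explicit family $\eta_n=\tau_n(t)\tilde r_n(x)$ of smooth approximate identities, sets $u_n=\eta_n*u$, $v_n=\eta_n*v$, derives the exact equations $\partial_t u_n+c\partial_x u_n=\eta_n*\big(k(v-u)\big)$, computes $\tfrac{d}{dt}\mathrm{H}[u_n,v_n]$ classically, integrates over $[s,t]$, and then passes to the limit by dominated convergence (using the uniform bounds $\delta\le u_n,v_n\le M$). This is more hands-on but fully self-contained; your renormalization shortcut is legitimate here because the transport field is constant and the source $k(v-u)$ is $L^\infty$, but it does defer the work to a cited result.

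For part (ii), the difference is more substantial. You regularize the \emph{entropy} (shift the argument of $\varphi$ by $\epsilon$) while keeping the weak solution fixed, and then pass to the limit by dominated convergence on the left and Fatou/monotone convergence on the right. The paper instead regularizes the \emph{initial data}: it sets $u_{0,\varepsilon}=\max\{u_0,\varepsilon\}$, $v_{0,\varepsilon}=\max\{v_0,\varepsilon\}$, applies part (i) to the corresponding solution $u_\varepsilon,v_\varepsilon$ (which is uniformly bounded below by $\varepsilon$), obtains the inequality for $\mathrm{H}[u_\varepsilon,v_\varepsilon]$, and then lets $\varepsilon\to0$ using weak $L^2$ convergence of $u_\varepsilon,v_\varepsilon$ to $u,v$ together with weak lower semicontinuity of $\mathrm{H}$ and of the $L^2$ norm. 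The paper's approach reuses part (i) as a black box and exploits the stability of weak solutions under perturbation of initial data; your approach is more elementary in that it avoids both the stability argument and the lower-semicontinuity step, needing only that $\varphi_\epsilon'\in L^\infty([0,M])$ so the chain rule applies directly to the given solution.
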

\begin{proof} $(i)$
Theorem \ref{exist} provides that the solution  $u,v$ satisfies
\begin{equation}\label{bound}\delta\leq u(t,x),v(t,x)\leq \max\{||u_0||_{L^{\infty}}, ||v_0||_{L^{\infty}}\},\,\,\,\,\,\, \forall\,t\geq 0, \,\forall\, x\in \mathbb{T}.
\end{equation} 
We have 
\begin{equation}\label{dist1}
\int_0^{\infty}\int_{\mathbb{T}} u(\partial_t \psi+c\partial_x \psi)dxdt+\int_{\mathbb{T}}\psi_{|t=0}u_0dx+\int_0^{\infty}\int_{\mathbb{T}} k(u,v,x)(u-v)\psi dxdt=0
\end{equation}
and \begin{equation}\label{dist2}
\int_0^{\infty}\int_{\mathbb{T}} v(\partial_t \psi-c\partial_x \psi)dxdt+\int_{\mathbb{T}}\psi_{|t=0}v_0dx+\int_0^{\infty}\int_{\mathbb{T}} k(u,v,x)(v-u)\psi dxdt=0
\end{equation}
for all $\psi\in C^{\infty}_{c}([0,\infty)\times \mathbb{T}).$  We consider the function $r\in C^{\infty}_c(\mathbb{R})$ given by
$$r(x)\colonequals \begin{cases}
e^{\frac{1}{x^2-1}},\, \, \,  &\text{if}\, \, \, \, |x|< 1\\
0, &\text{if}\,\, \, \, |x|>1.
\end{cases}$$
We define a sequence of smooth functions $r_n\in C_c^{\infty}(\mathbb{R}), \,  n\in \mathbb{N},$ given by
$$r_n(x)\colonequals \frac{1}{\int_{\mathbb{R}}r(x')dx'}n r(nx).$$
Let $\tilde{r}_n\in C^{\infty}(\mathbb{T})$ be a periodic function such that $\tilde{r}_n(x)=r_n(x)$ for $x\in [-1/2,1/2).$
Let $\tau_n \in C^{\infty}_c(\mathbb{R})$  have its support in the interval $(-1/n, 0)$ and tend to the Dirac function as $n\to \infty.$ We also assume $\tau_n\geq 0$ and $ \int_{\mathbb{R}}\tau_n(t)dt=1,$ $n\in \mathbb{N}.$ Then  $\eta_n(t,x)\colonequals \tau_n(t)\tilde{r}_n(x) $ forms a sequence of smooth mollifiers (see \cite[Section 4.4]{Brezis})\footnote{Actually, $\tilde{r}_n$ is smooth only for $n\geq 2,$ but this won't be a problem in the sequel.}. 
 We define
\begin{equation*}
u_n\colonequals \eta_n*u\,\,\,\text{and}\,\,\,v_n\colonequals \eta_n*v
\end{equation*}
where  $u$ and $v$ are extended to $t\in (-\infty,0)$ by making them equal to zero.
Young's inequality (see \cite[Proposition 8.9 and 8.10]{Foll}) provides  $$u_n, v_n\in C^{\infty}([0,\infty),W^{s,k} (\mathbb{T})), \, \, \forall\, s\geq 0,\,\, \forall\, k\in[1,\infty].$$
 Replacing $(t,x)$ by $(t',x')$  and then choosing $\psi(t',x')=\eta_n(t-t', x-x')$ in \eqref{dist1} and \eqref{dist2}, we get 
 $$\begin{cases}
\partial_t u_n+c\partial_x u_n=\eta_n*\big(k(u,v,x)(v-u)\big),\,\,\,\,&x\in \mathbb{T}, \,\,t>0,\\
\partial_t v_n-c\partial_x v_n=\eta_n*\big(k(u,v,x)(u-v)\big),\,\,\,\,\,&x\in \mathbb{T}, \,\,t>0.
\end{cases}.$$ 
Using this equations, we compute  $$\frac{d}{dt}\mathrm{H}[u_n(t),v_n(t)]= -\int_{\mathbb{T}} \eta_n*\big(k(u,v,x)(u-v)\big)\left(\log{u_n}-\log{v_n}\right)dx,\,\,\,\, \,\,t>0.$$
 We integrate this equation on the interval $[s,t],\, s,t>0$
 \begin{equation}\label{intH}
 \mathrm{H}[u_n(t),v_n(t)]-\mathrm{H}[u_n(s),v_n(s)]= -\int_{s}^{t}\int_{\mathbb{T}} \eta_n*\big(k(u,v,x)(u-v)\big)\left(\log{u_n}-\log{v_n}\right)dxdt'.
 \end{equation}
 \cite[Theorem 8.15]{Foll} shows  $u_n\to u, v_n\to v$ and $\eta_n*\big(k(u,v,x)(u-v)\big)\left(\log{u_n}-\log{v_n}\right)\to k(u,v,x)(u-v)\left(\log{u}-\log{v}\right)$ pointwise almost everywhere  as $n\to \infty.$ $u_n$ and $v_n$ satisfies the bound in \eqref{bound}. This implies $|\log{u_n}-\log{v_n}|,$ $\frac{u_n}{m_{\infty}}\log{\left(\frac{u_n}{m_{\infty}}\right)}-\frac{u_n}{m_{\infty}}+1,$ and $\frac{v_n}{m_{\infty}}\log{\left(\frac{v_n}{m_{\infty}}\right)}-\frac{v_n}{m_{\infty}}+1$ are uniformly bounded. The assumptions on $k$  shows $|\eta_n*\big(k(u,v,x)(u-v)\big)|\leq 2\kappa_2 M.  $  The Lebesgue dominated convergence theorem lets us  pass to the limit in \eqref{intH}  $$\mathrm{H}[u(t),v(t)]-\mathrm{H}[u(s),v(s)]= -\int_{s}^{t}\int_{\mathbb{T}} k(u,v,x)(u-v)\left(\log{u}-\log{v}\right) dxdt'.$$
 This  shows that $\mathrm{H}[u(t),v(t)]$ is absolutely continuous, hence  it is  differentiable for a.e. $t> 0$ and the claimed equality holds.\\
 $(ii)$ Let $\varepsilon$ be a sufficiently small positive number. Let $u_{\varepsilon}, v_{\varepsilon}$ be the weak solution with the initial data $u_{0,\varepsilon}\colonequals \max\{u_0,\varepsilon\}, v_{0,\varepsilon}\colonequals\max\{ v_0,\varepsilon\}. $
 Then from the first part of the proof we have 
 $$\mathrm{H}[u_{\varepsilon}(t),v_{\varepsilon}(T)]-\mathrm{H}[u_{0,\varepsilon},v_{0,\varepsilon}]= -\int_{0}^{T}\int_{\mathbb{T}} k(u_{\varepsilon},v_{\varepsilon},x)(u_{\varepsilon}-v_{\varepsilon})\left(\log{u_{\varepsilon}}-\log{v_{\varepsilon}}\right) dxdt,\,\,\,\,T>0.$$
 By the mean value theorem, there exists $\theta \in [0,1]$ such that 
$$\left(\log{u_{\varepsilon}}-\log{v_{\varepsilon}}\right)(u_{\varepsilon}-v_{\varepsilon})=\frac{(u_{\varepsilon}-v_{\varepsilon})^2}{\theta u_{\varepsilon}+(1-\theta)v_{\varepsilon}}\geq \frac{(u_{\varepsilon}-v_{\varepsilon})^2}{\max\{u_{\varepsilon},v_{\varepsilon}\}}. $$ 
Using the lower bound on $k(u_{\varepsilon},v_{\varepsilon},x),$ we can estimate 
$$\frac{k(u_{\varepsilon},v_{\varepsilon},x)}{\max\{u_{\varepsilon},v_{\varepsilon}\}}\geq \frac{k_1(x)(u_{\varepsilon}+v_{\varepsilon})^{\alpha}}{\max\{u_{\varepsilon},v_{\varepsilon}\}}\geq k_1(x)(u_{\varepsilon}+v_{\varepsilon})^{\alpha-1}\geq (2M)^{\alpha-1} \mathrm{ess\,inf}_{x\in \mathbb{T}}k_1(x)>0.$$
These estimates yield
\begin{equation}\label{H_eps}
\mathrm{H}[u_{\varepsilon}(T),v_{\varepsilon}(T)]-\mathrm{H}[u_{0,\varepsilon},v_{0,\varepsilon}]\leq  - (2M)^{\alpha-1} \mathrm{ess\,inf}_{x\in \mathbb{T}}k_1(x)\int_{0}^{T}\int_{\mathbb{T}} (u_{\varepsilon}-v_{\varepsilon})^2 dxdt.
\end{equation}
 Since $u_{\varepsilon}$ and $ v_{\varepsilon}$ are uniformly  bounded by $M,$  they converge weakly in $L^2([0,T]\times \mathbb{T})$ (or in $L^2(\mathbb{T})$ for every fixed $t\in [0,T]$) to some functions $u,v$ as $\varepsilon\to 0.$ By writing the equations \eqref{dist1} and \eqref{dist2} for $u_{\varepsilon}$ and $v_{\varepsilon},$ and letting $\varepsilon\to 0,$ we get  the limit $u,v$ is the weak solution with the initial data $u_0, v_0.$ It is easy to check that $\lim_{\varepsilon\to 0}\mathrm{H}[u_{0,\varepsilon},v_{0,\varepsilon}]=\mathrm{H}[u_{0},v_{0}].$ Also,  $\mathrm{H}[u_{\varepsilon}(t),v_{\varepsilon}(T)]$ and the $L^2$ norm are lower semi-continuous  w.r.t weak convergence (see \cite[Proposition 7.7]{San}). Hence, by letting $\varepsilon \to 0$ in \eqref{H_eps}, we get
 $$\mathrm{H}[u(T),v(T)]-\mathrm{H}[u_{0},v_{0}]\leq  - (2M)^{\alpha-1} \mathrm{ess\,inf}_{x\in \mathbb{T}}k_1(x)\int_{0}^{T}\int_{\mathbb{T}} (u-v)^2 dxdt.$$ 
\end{proof}
Let $u,v$ be any non-negative functions in $ L^{\infty}(\mathbb{T}).$ We recall the mass density $\rho\colonequals u+v$ and the momentum $j\colonequals u-v.$
Let $m_{\infty}\colonequals \frac{1}{2}\int_{\mathbb{T}}\rho dx>0$ and  $\phi$ be the solution of the Poisson equation
\begin{equation}\label{Pois}
-\partial^2_{xx}\phi =\rho-2m_{\infty},\,\,\,\,\,\, x\in \mathbb{T}. 
\end{equation}
Since $\rho-2m_{\infty}$ has zero mass, the classical elliptic regularity results show that there exists a unique solution $\phi$  satisfying 
\begin{equation}\label{E.R}
||\phi||_{H^{2}(\mathbb{T})}\leq C_R||\rho-2m_{\infty}||_{L^2(\mathbb{T})}
\end{equation}
  for some $C_R>0.$
With the above notations, for non-negative function $u,v\in L^{\infty}, $ we define 
\begin{equation}\label{E}
\mathrm{E}[u,v]\colonequals \mathrm{H}[u,v]+\varepsilon \int_{\mathbb{T}}j\partial_x \phi dx, 
\end{equation}
where $\varepsilon>0$ will be fixed later.
\begin{lemma}\label{Eeqv}
Let $0 \leq u, v\in L^{\infty}(\mathbb{T}),$ $M\colonequals \max\{||u||_{L^{\infty}},\,||v||_{L^{\infty}}\},$ and $m_{\infty}\colonequals \frac{1}{2}\int_{\mathbb{T}}\rho dx>0.$ If $\varepsilon>0$ is small enough,  there exist positive constants  $C_1=C_1(m_{\infty},M)$ and $C_2=C_2(m_{\infty},M)$ such that 
\begin{equation}\label{M/mE}
 \frac{1}{C_2}\int_{\mathbb{T}}\big(({u}-{m_{\infty}})^2+({v}-{m_{\infty}})^2\big) dx\leq \mathrm{E}[u,v]\leq\frac{1}{C_1} \int_{\mathbb{T}}\big(({u}-{m_{\infty}})^2+({v}-{m_{\infty}})^2\big) dx.
\end{equation}
\end{lemma}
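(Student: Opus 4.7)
The plan is to leverage Lemma \ref{Heqv} (which already pins $\mathrm{H}[u,v]$ between two multiples of the $L^2$ norm of $(u-m_\infty,v-m_\infty)$) and to show that the extra term $\varepsilon\int_{\mathbb{T}}j\,\partial_x\phi\,dx$ is a small perturbation, bounded in absolute value by a constant times the same $L^2$ norm squared. Choosing $\varepsilon$ small enough so that this perturbation does not swallow the lower bound from Lemma \ref{Heqv} then yields the desired two-sided estimate.

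The key step is the estimate on the cross term. Write $j=u-v=(u-m_\infty)-(v-m_\infty)$, so by the elementary inequality $(a-b)^2\leq 2a^2+2b^2$,
\begin{equation*}
\|j\|_{L^2(\mathbb{T})}^2 \leq 2\int_{\mathbb{T}}\bigl((u-m_\infty)^2+(v-m_\infty)^2\bigr)\,dx.
\end{equation*}
For $\phi$, I would apply the elliptic regularity bound \eqref{E.R} together with $\|\partial_x\phi\|_{L^2}\leq \|\phi\|_{H^2}\leq C_R\|\rho-2m_\infty\|_{L^2}$. Since $\rho-2m_\infty=(u-m_\infty)+(v-m_\infty)$, the same elementary inequality gives
\begin{equation*}
\|\partial_x\phi\|_{L^2(\mathbb{T})}^2 \leq 2C_R^2\int_{\mathbb{T}}\bigl((u-m_\infty)^2+(v-m_\infty)^2\bigr)\,dx.
\end{equation*}
Cauchy--Schwarz then produces
\begin{equation*}
\left|\int_{\mathbb{T}}j\,\partial_x\phi\,dx\right| \leq 2C_R\int_{\mathbb{T}}\bigl((u-m_\infty)^2+(v-m_\infty)^2\bigr)\,dx.
\end{equation*}

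With this in hand, combining with Lemma \ref{Heqv} yields, for every $\varepsilon>0$,
\begin{equation*}
\left(\frac{1}{2M}-2\varepsilon C_R\right)\!\int_{\mathbb{T}}\!\bigl((u-m_\infty)^2+(v-m_\infty)^2\bigr)dx \leq \mathrm{E}[u,v] \leq \left(\frac{1}{m_\infty}+2\varepsilon C_R\right)\!\int_{\mathbb{T}}\!\bigl((u-m_\infty)^2+(v-m_\infty)^2\bigr)dx.
\end{equation*}
Finally, I would choose $\varepsilon>0$ strictly less than $1/(4MC_R)$ so that the prefactor on the left is positive; setting $C_2\colonequals (1/(2M)-2\varepsilon C_R)^{-1}$ and $C_1\colonequals (1/m_\infty+2\varepsilon C_R)^{-1}$ gives the claim, with both constants depending only on $m_\infty$ and $M$ (the domain constant $C_R$ being universal).

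I do not expect any real obstacle: the only delicate point is making sure $\partial_x\phi$ is controlled by $\|\rho-2m_\infty\|_{L^2}$ rather than by some higher norm, which is exactly what \eqref{E.R} provides. Once that is noted, the proof is a straightforward Cauchy--Schwarz followed by the smallness choice of $\varepsilon$.
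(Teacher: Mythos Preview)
Your proof is correct and follows essentially the same route as the paper's: bound the cross term $\left|\int_{\mathbb{T}}j\,\partial_x\phi\,dx\right|$ by $2C_R\int_{\mathbb{T}}\bigl((u-m_\infty)^2+(v-m_\infty)^2\bigr)dx$ via Cauchy--Schwarz and the elliptic estimate \eqref{E.R}, then combine with Lemma~\ref{Heqv} and choose $\varepsilon<1/(4MC_R)$. Your constants $C_1,C_2$ coincide exactly with the paper's after a trivial algebraic rewriting.
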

\begin{proof}
The H\"older inequality and \eqref{E.R} provide
$$\left|\int_{\mathbb{T}}j\partial_x \phi dx\right|\leq \sqrt{\int_{\mathbb{T}}j^2 dx}\sqrt{\int_{\mathbb{T}}(\partial_x \phi)^2 dx}\leq C_{R} \sqrt{\int_{\mathbb{T}}j^2 dx}\sqrt{\int_{\mathbb{T}}(\rho-2m_{\infty})^2 dx} .$$
We can estimate
$$j^2=\big((u-m_{\infty})-(v-m_{\infty})\big)^2\leq 2(u-m_{\infty})^2 +2(v-m_{\infty})^2 $$
and 
$$(\rho-2m_{\infty})^2=\big((u-m_{\infty})+(v-m_{\infty})\big)^2\leq 2 (u-m_{\infty})^2+2(v-m_{\infty})^2.$$
These estimates show $$\left|\int_{\mathbb{T}}j\partial_x \phi dx\right| \leq 2C_R \int_{\mathbb{T}}\big((u-m_{\infty})^2+(v-m_{\infty})^2\big)dx.$$
This estimate and \eqref{M/m} show 
\begin{align*}
\frac{1-4\varepsilon C_R M }{2M}\int_{\mathbb{T}}\big(({u}-{m_{\infty}})^2+({v}-{m_{\infty}})^2\big) dx&\leq \mathrm{E}[u,v]\\
&\leq  \frac{1+2 \varepsilon C_R m_{\infty}}{m_{\infty}}\int_{\mathbb{T}}\big(({u}-{m_{\infty}})^2+({v}-{m_{\infty}})^2\big) dx.
\end{align*}
 If $0<\varepsilon<\frac{1}{4MC_R},$ then  \eqref{M/mE} holds with $C_2\colonequals \frac{2M}{1-4\varepsilon C_R M }$ and $C_1\colonequals \frac{m_{\infty}}{1+2 \varepsilon C_R m_{\infty}}. $
\end{proof}
\subsection*{3.2 Proofs}
In this section we prove Theorem \ref{main} and Theorem \ref{main1}. The idea is to prove
 a Gr\"onwall  inequality for the functional $\mathrm{E}[u,v]$ in \eqref{E}.  We will need the following general functional analysis fact whose  proof can found, for instance, in \cite[Lemma 1.2, page 260]{Temam}.
  \begin{lemma}[\bf{Lions-Magenes lemma}]\label{le:partial_t rho}
  Let $V,H,V'$  be three Hilbert spaces, each space included in the following one with dense and bounded embedding, $V'$ being the dual of $V$.   Then, for $T>0$, 
  the following inclusion 
  $$
  L^2([0,T];V)\cap H^1([0,T];V')\subset C([0,T];H)
  $$
  holds true. Moreover, for any $g \in L^2([0,T];V)\cap H^1([0,T];V')$ there holds 
  $$
  t\rightarrow ||g(t)||^2_{H} \in W^{1,1}(0,T)
  $$
  and
  \begin{equation*}
  \dfrac{d}{dt}||g(t)||^2_{H}=2\langle g'(t),g(t) \rangle_{V',V}  \,\,\,\, a.e. \text{    on   }   (0,T),
  \end{equation*}
  where $||\cdot||_H$ denotes the norm in $H,$ and $\langle \cdot, \cdot \rangle_{V',V}$ denotes the action of $V'$ on $V.$
  \end{lemma}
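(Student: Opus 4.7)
The Lions--Magenes lemma is established by time-mollification: one approximates $g$ by a sequence of smooth functions $g_\varepsilon \in C^\infty([0,T];V)$, applies the classical chain rule to the smooth approximants, and then shows that the resulting energy identities pass to the limit because the approximants form a Cauchy sequence in $C([0,T];H)$. The starting point is the elementary product rule: for $h \in C^\infty([0,T];V)$ one has $\frac{d}{dt}\|h(t)\|_H^2 = 2(h'(t),h(t))_H$, and under the Gelfand triple structure $V \hookrightarrow H \cong H' \hookrightarrow V'$ the inner product on $H$ can be identified with the duality pairing $\langle \cdot,\cdot\rangle_{V',V}$ whenever one factor lies in $V$.

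First I would extend $g$ to a slightly larger open interval containing $[0,T]$ (for instance by reflecting across each endpoint and then multiplying by a smooth cutoff) so that the extension $\tilde g$ belongs to $L^2(\bbR;V)\cap H^1(\bbR;V')$ and agrees with $g$ on $[0,T]$. Convolving $\tilde g$ in time with a standard mollifier $\rho_\varepsilon$ yields $g_\varepsilon \colonequals \rho_\varepsilon \ast \tilde g \in C^\infty([0,T];V)$ with $g_\varepsilon \to g$ in $L^2([0,T];V)$ and $g_\varepsilon' \to g'$ in $L^2([0,T];V')$ as $\varepsilon \to 0$. The smooth product rule applied to $g_\varepsilon$ gives, for $0\le s\le t\le T$,
\begin{equation*}
\|g_\varepsilon(t)\|_H^2 - \|g_\varepsilon(s)\|_H^2 = 2\int_s^t \langle g_\varepsilon'(\tau), g_\varepsilon(\tau)\rangle_{V',V}\, d\tau.
\end{equation*}

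Applying this identity to the difference $g_\varepsilon - g_{\varepsilon'}$ and averaging in $s$ over $[0,T]$ yields
\begin{equation*}
\sup_{t\in[0,T]} \|(g_\varepsilon - g_{\varepsilon'})(t)\|_H^2 \leq \frac{C}{T}\|g_\varepsilon - g_{\varepsilon'}\|_{L^2([0,T];V)}^2 + 2\|(g_\varepsilon - g_{\varepsilon'})'\|_{L^2([0,T];V')} \|g_\varepsilon - g_{\varepsilon'}\|_{L^2([0,T];V)},
\end{equation*}
where the first term uses the continuous embedding $V \hookrightarrow H$ and the second comes from Cauchy--Schwarz applied to the duality pairing. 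Both contributions vanish as $\varepsilon,\varepsilon' \to 0$, so $(g_\varepsilon)$ is Cauchy in $C([0,T];H)$. Its uniform limit is necessarily a continuous representative of $g$, which proves the inclusion $L^2([0,T];V)\cap H^1([0,T];V')\subset C([0,T];H)$. Passing to the limit $\varepsilon\to 0$ in the energy identity for $g_\varepsilon$ (the right-hand side converges because $g_\varepsilon'\to g'$ in $L^2(V')$ and $g_\varepsilon\to g$ in $L^2(V)$) produces the absolute continuity $t\mapsto \|g(t)\|_H^2 \in W^{1,1}(0,T)$ together with the chain rule $\frac{d}{dt}\|g(t)\|_H^2=2\langle g'(t),g(t)\rangle_{V',V}$ a.e.

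The main obstacle is the extension step: the naive extension by zero is generically not in $H^1(\bbR;V')$ because it would create boundary jumps whose distributional derivatives are Dirac masses with values in $V'$. A suitable reflection across each endpoint followed by a smooth truncation resolves this, but one must verify carefully that this operation is bounded from $L^2([0,T];V)\cap H^1([0,T];V')$ into $L^2(\bbR;V)\cap H^1(\bbR;V')$. Once this technical point is handled, the Cauchy argument and the identification of the continuous representative via uniqueness of limits in $L^2([0,T];H)$ are routine.
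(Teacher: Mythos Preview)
Your mollification argument is the standard route to the Lions--Magenes lemma and is correct as sketched; the extension step you flag is indeed the only genuinely delicate point, and your reflection-plus-cutoff fix is the usual remedy. However, the paper does not actually prove this lemma: it states it as a ``general functional analysis fact'' and refers the reader to \cite[Lemma 1.2, page 260]{Temam} for the proof. So there is no in-paper argument to compare against; your proposal simply supplies what the paper outsources, and it does so along the same lines as the cited reference.
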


\bigskip

\begin{proof}[\textbf{Proof of Theorem \ref{main}}]  Theorem \ref{exist} provides that $ u$ and $v$ satisfy $$\delta\leq u,v\leq M,\,\,\, \forall\,t\geq 0,\,\,\forall\, x\in \mathbb{T}.$$  Since $k(u,v,x)$ is an interaction rate of type 3, there exist $\kappa_3=\kappa_3(\delta, M)$ and $\kappa_4=\kappa_4(\delta, M)$ such that \begin{equation}\label{k}
\kappa_3\leq k(u,v,x)\leq \kappa_4,\,\,\,\forall\,t\geq 0,\,\, \forall \, x\in \mathbb{T}.
\end{equation}
We want to compute the time derivative of $\mathrm{E}[u(t),v(t)]$ 
\begin{equation}\label{dE}\frac{d}{dt}\mathrm{E}[u(t),v(t)]=\frac{d}{dt}\mathrm{H}[u(t),v(t)]+\varepsilon \frac{d}{dt} \int_{\mathbb{T}} j\partial_x \phi dx.
\end{equation}
We need to justify this time differentiation since $u,v$ is a weak solution.
Lemma \ref{a.e.H} $(i)$ provides that $\mathrm{H}[u(t),v(t)]$ is differentiable for almost every $t\in (0, \infty)$ and
\begin{align*}
\frac{d}{dt}\mathrm{H}[u(t),v(t)]= -\int_{\mathbb{T}}k(u,v,x)\left(\log{u}-\log{v}\right)(u-v)dx.
\end{align*}
By the mean value theorem there exists $\xi$ (lying on the line connecting $u$ and $v$) such that $$\left(\log{u}-\log{v}\right)(u-v)=\frac{(u-v)^2}{\xi}\geq \frac{(u-v)^2}{M}. $$  The above estimate and \eqref{k} imply 
\begin{equation}\label{dH}\frac{d}{dt}\mathrm{H}[u(t),v(t)]\leq -\frac{\kappa_3}{M}\int_{\mathbb{T}}j^2 dx.
\end{equation}
 We have $\partial_t j=-c\partial_x \rho-2k(u,v,x)j\in H^{-1}(\mathbb{T})$  and $\partial_x \phi\in H^1(\mathbb{T}).$  Also, by differentiating \eqref{Pois} w.r.t $t>0$, we get $\partial^3_{txx} \phi=-\partial_t \rho=c\partial_x j\in H^{-1}(\mathbb{T}) $, and so  $\partial^2_{tx} \phi\in L^2(\mathbb{T}).$   Lemma \ref{le:partial_t rho} with $V=H^1(\mathbb{T})$ and $H=L^2(\mathbb{T})$ shows
\begin{equation}\label{1/4} \frac{d}{dt}\int_{\mathbb{T}} j\partial_x \phi dx=\langle \partial_t j,\partial_x \phi\rangle_{H^{-1}, H^1}+\int_{\mathbb{T}} j\partial^2_{tx} \phi dx.
\end{equation}
Using $\partial_t j=-c\partial_x \rho-2k(u,v,x)j\in H^{-1}(\mathbb{T})$ we estimate 
\begin{align*}
\langle \partial_t j,\partial_x \phi\rangle_{H^{-1}, H^1}=&-c\langle \partial_x\rho,\partial_x \phi\rangle_{H^{-1}, H^1}-2\int_{\mathbb{T}}k(u,v,x) j\partial_x \phi dx\\
=& -c\int_{\mathbb{T}}(\rho-2m_{\infty})^2 dx-2\int_{\mathbb{T}}k(u,v,x) j\partial_x \phi dx. 
\end{align*}
This equation can be estimated using \eqref{k}, the H\"older inequality, and \eqref{E.R} 
\begin{align}\label{dj}
\langle \partial_t j,\partial_x \phi\rangle_{H^{-1}, H^1}\leq &-c\int_{\mathbb{T}}(\rho-2m_{\infty})^2 dx+\kappa_4C_{R}\sqrt{\int_{\mathbb{T}} j^2 dx}\sqrt{\int_{\mathbb{T}} (\rho-2m_{\infty})^2 dx}\nonumber \\
\leq & -\frac{c}{2}\int_{\mathbb{T}}(\rho-2m_{\infty})^2 dx+\frac{\kappa^2_4C^2_R}{2c}\int_{\mathbb{T}} j^2 dx.
\end{align}
Since $\partial^3_{xxt} \phi=-\partial_t \rho=c\partial_x j\in H^{-1}(\mathbb{T})$ and so  $\partial_{t} \phi\in H^1(\mathbb{T}),$ we have  
\begin{align*}
\int_{\mathbb{T}} (\partial^2_{tx} \phi)^2 dx=&-\langle \partial^3_{xxt} \phi, \partial_t \phi \rangle_{H^{-1}, H}=-c\langle \partial_x j, \partial_{t} \phi\rangle_{H^{-1}, H} \\=&c\int_{\mathbb{T}} \partial^2_{tx} \phi j dx\leq c \sqrt{\int_{\mathbb{T}} (\partial^2_{tx} \phi)^2 dx}\sqrt{\int_{\mathbb{T}} j^2dx}.
\end{align*}
We conclude $$\int_{\mathbb{T}} (\partial^2_{tx} \phi)^2 dx\leq c^2\int_{\mathbb{T}} j^2dx$$
and so 
\begin{equation}\label{dfi}
\int_{\mathbb{T}} j\partial^2_{tx} \phi dx\leq \sqrt{\int_{\mathbb{T}} (\partial^2_{tx} \phi)^2 dx}\sqrt{\int_{\mathbb{T}} j^2dx}\leq c\int_{\mathbb{T}} j^2dx.
\end{equation}
\eqref{dE}-\eqref{dfi} yield
\begin{align*}
\frac{d}{dt}\mathrm{E}[u(t),v(t)]\leq -\left(\frac{\kappa_3}{M}-\frac{\varepsilon \kappa^2_4C^2_R}{2c}-\varepsilon c\right)\int_{\mathbb{T}}j^2 dx-\frac{\varepsilon c}{2}\int_{\mathbb{T}}(\rho-2m_{\infty})^2 dx.
\end{align*}
We fix $\varepsilon>0$ such that $\frac{\kappa_3}{M}-\frac{\varepsilon \kappa^2_4C^2_R}{2c}-\varepsilon c>0$ and \eqref{M/mE} holds.
Then we have 
\begin{align*}
\frac{d}{dt}\mathrm{E}[u(t),v(t)]\leq & -\min\left\{\frac{\varepsilon c}{2},\frac{\kappa_3}{M}-\frac{\varepsilon \kappa^2_4C^2_R}{2 c}-\varepsilon c \right\}\left[\int_{\mathbb{T}}j^2 dx+\int_{\mathbb{T}}(\rho-2m_{\infty})^2 dx
\right]\\=&-2\min\left\{\frac{\varepsilon c}{2},\frac{\kappa_3}{M}-\frac{\varepsilon \kappa^2_4C^2_R}{2c}-\varepsilon c \right\}\left[\int_{\mathbb{T}}\left({u}-{m_{\infty}}\right)^2dx+\int_{\mathbb{T}}\left({v}-{m_{\infty}}\right)^2dx\right]\\
\leq & -2C_1\min\left\{\frac{\varepsilon c}{2},\frac{\kappa_3}{M}-\frac{\varepsilon \kappa^2_4C^2_R}{2 c}-\varepsilon c \right\} \mathrm{E}[u(t),v(t)],
\end{align*}
where the last inequality follows from \eqref{M/mE}.
Gr\"onwall's inequality yields $$\mathrm{E}[u(t),v(t)]\leq e^{-2\lambda t}\mathrm{E}[u_0,v_0],\,\,\,\,\,\forall \, t\geq 0$$ with $$\lambda \colonequals C_1\min\left\{\frac{\varepsilon c}{2},\frac{\kappa_3}{M}-\frac{\varepsilon \kappa^2_4C^2_R}{2c}-\varepsilon c\right\}. $$ 
This estimate and \eqref{M/mE} provide
\begin{align*}
\int_{\mathbb{T}}\left({u(t,x)}-{m_{\infty}}\right)^2dx&+\int_{\mathbb{T}}\left({v(t,x)}-{m_{\infty}}\right)^2dx\\ \leq & \frac{C_2}{C_1}e^{-2\lambda t} \left[\int_{\mathbb{T}}\left({u_0(x)}-{m_{\infty}}\right)^2dx+\int_{\mathbb{T}}\left({v_0(x)}-{m_{\infty}}\right)^2dx\right],\,\,\,\,\forall\,t\geq 0.
\end{align*}
\end{proof}
 
 \bigskip
 
 \begin{proof}[\textbf{Proof of Theorem \ref{main1}}] Theorem \ref{exist} provides the solution $u,v$ satisfies $$0\leq u,v\leq M, \,\,\forall\,t\geq 0,\,\forall\, x\in \mathbb{T}.$$
  We consider the functional $\mathrm{E}[u(t),v(t)]$ in \eqref{E}. We have estimated $\mathrm{H}[u(t),v(t)]$ in Lemma \ref{a.e.H} $(ii)$. We want to get similar estimate for $\int_{\mathbb{T}} j\partial_x \phi dx.$ From \eqref{1/4} its time derivative is  given by
  $$\frac{d}{dt}\int_{\mathbb{T}} j\partial_x \phi dx=\langle \partial_t j, \partial_x \phi \rangle_{H^{-1},H^1}+\int_{\mathbb{T}} j\partial^2_{tx} \phi dx.$$
 As we assume $k(u,v,x)$ is an interaction rate of type 1, there exists $\kappa_1=\kappa_1(M)$ such that $k(u,v,x)\leq \kappa_1.$ Using this fact,  $\langle \partial_t j,\partial_x \phi \rangle_{H^{-1}, H^1}$ and $\int_{\mathbb{T}} j\partial^2_{tx} \phi dx$ can be  estimated as in \eqref{dj} and \eqref{dfi}, respectively. More precisely, we have 
$$\langle \partial_t j,\partial_x \phi \rangle_{H^{-1},H^1}\leq -\frac{c}{2}\int_{\mathbb{T}}(\rho-2m_{\infty})^2 dx+\frac{\kappa_1 C^2_R}{2c}\int_{\mathbb{T}} j^2 dx$$
and 
$$\int_{\mathbb{T}} j\partial^2_{tx} \phi dx\leq  c \int_{\mathbb{T}} j^2dx.$$ 
Let $T>0.$ The above estimates imply
$$\int_{\mathbb{T}} j(T)\partial_x \phi(T) dx\leq \int_{\mathbb{T}} j(0)\partial_x \phi(0) dx-\frac{c}{2}\int_0^T\int_{\mathbb{T}}(\rho-2m_{\infty})^2 dxdt+\left(\frac{\kappa_1 C^2_R}{2c}+c\right)\int_0^T\int_{\mathbb{T}} j^2dxdt.$$
This estimate and Lemma \ref{a.e.H} $(ii)$ show
\begin{align*}\mathrm{E}[u(T),v(T)]\leq & \mathrm{E}[u(0),v(0)]-\frac{\varepsilon c}{2}\int_0^T\int_{\mathbb{T}}(\rho-2m_{\infty})^2 dxdt\\
&-\left((2M)^{\alpha-1} \mathrm{ess\,inf}_{x\in \mathbb{T}}k_1(x)-\varepsilon\left(\frac{\kappa_1 C^2_R}{2c}+c\right)\right)\int_0^T\int_{\mathbb{T}} j^2dxdt.
\end{align*}
We choose $\varepsilon>0$ such that
$$(2M)^{\alpha-1} \mathrm{ess\,inf}_{x\in \mathbb{T}}k_1(x)-\varepsilon\left(\frac{\kappa_1 C^2_R}{2c}+c\right)>0$$
 and \eqref{M/mE} holds.  
We use 
$(\rho-2m_{\infty})^2+j^2=2(u-m_{\infty})^2+2(v-m_{\infty})^2$
and \eqref{M/mE} to get
$$\mathrm{E}[u(T),v(T)]\leq  \mathrm{E}[u(0),v(0)]-2\lambda_{\alpha} \int_0^T\mathrm{E}[u(t),v(t)]dt,$$
where $$
\lambda_{\alpha}\colonequals C_1 \min\left\{\frac{\varepsilon c}{2}, (2M)^{\alpha-1} \mathrm{ess\,inf}_{x\in \mathbb{T}}k_1(x)-\frac{\varepsilon \kappa_1 C^2_R}{2c}-\varepsilon c \right\}. $$ Grönwall's lemma provides $\mathrm{E}[u(t),v(t)]\leq e^{-2\lambda_{\alpha}}\mathrm{E}[u_0,v_0]$ for all $t\geq 0.$  Then we use \eqref{M/mE}  to get
\begin{align*}
\int_{\mathbb{T}}\left({u(t,x)}-{m_{\infty}}\right)^2dx&+\int_{\mathbb{T}}\left({v(t,x)}-{m_{\infty}}\right)^2dx\\ \leq  & \frac{C_2}{C_1}e^{-2\lambda_{\alpha} t} \left[\int_{\mathbb{T}}\left({u_0(x)}-{m_{\infty}}\right)^2dx+\int_{\mathbb{T}}\left({v_0(x)}-{m_{\infty}}\right)^2dx\right],\,\,\,\forall\, t\geq 0.
\end{align*} 

 \end{proof}
\section{Three dimensional model}
 
    As we showed in \eqref{d/dt3}, if $u_i,v_i,$ $i\in\{1,2,3\}$ is a classical solution to \eqref{2Eq}, then, for any convex function $\varphi,$ the integral $\int_{\mathbb{T}^3}\sum_{i=1}^3 (\varphi (u_i)+\varphi(v_i))dx$
 is non-increasing in $t\geq 0.$  We consider again the case $\varphi (u)=u\log{\left(\frac{u}{m_{\infty}}\right)}-{u}+{m_{\infty}}$ with continuous extension to 0. We introduce Boltzmann's entropy for the three dimensional equation \eqref{2Eq} as the sum
\begin{equation*}
\sum_{i=1}^3\mathrm{H}[u_i,v_i]\colonequals \int_{\mathbb{T}^3}\sum_{i=1}^3\left[\left(\frac{u_i}{m_{\infty}}\log{\left(\frac{u_i}{m_{\infty}}\right)}-\frac{u_i}{m_{\infty}}+1\right)m_{\infty}+\left(\frac{v_i}{m_{\infty}}\log{\left(\frac{v_i}{m_{\infty}}\right)}-\frac{v_i}{m_{\infty}}+1\right)m_{\infty}\right]dx
\end{equation*}
defined for non-negative functions $u_i,v_i \in L^{\infty}( \mathbb{T}^3),i\in\{1,2,3\} $ and  $m_{\infty}\colonequals \frac{1}{6}\int_{\mathbb{T}^3}\sum_{i=1}^3(u_i+v_i)dx>0.$  
\begin{lemma}\label{a.e.H3}
$(i)$ Let $k(u_1,u_2,u_3,v_1,v_2,v_3,x)$ be an interaction rate of type 2.  Let $u_i,v_i\in L^{\infty}([0,\infty)\times \mathbb{T}^3),$ $i\in \{1,2,3\}$ be the weak solution of \eqref{1Eq} with the initial data 
$\delta \leq u_{0,i}, v_{0,i}\in L^{\infty}(\mathbb{T}^3)$ for some $\delta>0$. Let  $m_{\infty}\colonequals\frac{1}{6}\int_{\mathbb{T}}\sum_{i=1}^3(u_{0,i}+v_{0,i})dx>0.$ Then, for almost every $t\in (0,\infty),$ we have
\begin{align}\label{dtH3}
\frac{d}{dt}\sum_{i=1}^3 \mathrm{H}[u_i,v_i]=& -\frac{1}{2}\int_{\mathbb{T}^3}k\sum_{\substack{i,j=1\\i\neq j}}^3(\log{u_i}-\log{u_j})(u_i-u_j) dx\nonumber\\
 &-\frac{1}{2}\int_{\mathbb{T}^3}k\sum_{\substack{i,j=1\\i\neq j}}^3(\log{v_i}-\log{v_j})(v_i-v_j) dx\nonumber \\
 &-\frac{1}{2} \int_{\mathbb{T}^3}k\sum_{i, j=1}^3(\log{u_i}-\log{v_j})(u_i-v_j) dx\leq 0. 
\end{align}
$(ii)$ Let $k(u_1,u_2,u_3,v_1,v_2,v_3,x)$ be an interaction rate of type 1. Assume $$k(u_1,u_2,u_3,v_1,v_2,v_3,x)\geq k_1(x)(u_1+u_2+u_3+v_1+v_2+v_3)^{\alpha}$$ holds  for some $\alpha\in [0,1]$ and $k_1\in L^{\infty}(\mathbb{T}^3)$ with $\mathrm{ess\,inf}_{x\in \mathbb{T}^3 } k_1(x)>0.$   Let $u_i,v_i\in L^{\infty}([0,\infty)\times \mathbb{T}),$ $i\in \{1,2,3\}$ be the weak solution of \eqref{1Eq} with the initial data 
$0 \leq u_{0,i}, v_{0,i}\in L^{\infty}(\mathbb{T}^3).$  Let  $M\colonequals \max\{||u_0||_{L^{\infty}},\,||v_0||_{L^{\infty}}\}$ and  $m_{\infty}\colonequals\frac{1}{6}\int_{\mathbb{T}^3}\sum_{i=1}^3(u_{0,i}+v_{0,i})dx>0.$ Then, for any $T>0,$ we have
\begin{align*}
&\sum_{i=1}^3 \mathrm{H}[u_i(T),v_i(T)]-\sum_{i=1}^3 \mathrm{H}[u_i(0),v_i(0)]\\ &\leq  -\frac{(6M)^{\alpha-1}\mathrm{ess\,inf}_{x\in \mathbb{T}^3}k_1(x)}{2}\int_0^T\int_{\mathbb{T}^3}\Big[\sum_{\substack{i,j=1\\i\neq j}}^3\big((u_i-u_j)^2 +(v_i-v_j)^2\big) + \sum_{i, j=1}^3(u_i-v_j)^2 \Big]dxdt. 
\end{align*}
\end{lemma}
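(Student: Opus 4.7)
The plan is to transfer the strategy of Lemma \ref{a.e.H} to the system of six unknowns $u_1,u_2,u_3,v_1,v_2,v_3$. For part $(i)$, Theorem \ref{exist3}$(ii)$ supplies the two-sided bound $\delta \leq u_i(t,x), v_i(t,x) \leq M$ on $[0,\infty)\times\mathbb{T}^3$, and the type-2 hypothesis combined with this positivity lower bound gives $k(u_1,\ldots,v_3,x) \leq \kappa_2(\delta)$. I would then introduce a space-time mollifier $\eta_n(t,x) \colonequals \tau_n(t)\tilde r_n(x_1)\tilde r_n(x_2)\tilde r_n(x_3)$ built from the one-dimensional pieces used in Lemma \ref{a.e.H}, and test the distributional form of each of the six equations in \eqref{2Eq} with $\eta_n(t-t',x-x')$ to obtain smooth approximations $u_{i,n} \colonequals \eta_n * u_i$, $v_{i,n} \colonequals \eta_n * v_i$ satisfying the regularized system
\begin{equation*}
\begin{cases}
\partial_t u_{i,n} + c\,\partial_{x_i} u_{i,n} = \eta_n*\bigl[k(u_1,\ldots,v_3,x)(\rho-6u_i)\bigr], \\
\partial_t v_{i,n} - c\,\partial_{x_i} v_{i,n} = \eta_n*\bigl[k(u_1,\ldots,v_3,x)(\rho-6v_i)\bigr],
\end{cases}\qquad i\in\{1,2,3\}.
\end{equation*}
Computing $\tfrac{d}{dt}\sum_{i=1}^3 \mathrm{H}[u_{i,n}(t),v_{i,n}(t)]$ and integrating on $[s,t]$ yields an identity analogous to \eqref{intH}. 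Because $\delta\leq u_{i,n}, v_{i,n}\leq M$, the integrands of $\mathrm{H}$, the factors $\log u_{i,n}, \log v_{i,n}$, and the mollified $k$-terms are all uniformly bounded, so \cite[Theorem 8.15]{Foll} together with the dominated convergence theorem allows me to send $n\to\infty$. This shows that $\sum_i\mathrm{H}[u_i(\cdot),v_i(\cdot)]$ is absolutely continuous, and an algebraic symmetrization of $\sum_i \log u_i(\rho-6u_i) + \sum_i \log v_i(\rho-6v_i)$ into $u$-$u$, $v$-$v$, and $u$-$v$ pair contributions (by swapping the summation indices in the mixed terms) rearranges the result into the form \eqref{dtH3}.

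For part $(ii)$, I would approximate the initial data by $u_{0,i,\varepsilon}\colonequals\max\{u_{0,i},\varepsilon\}$ and $v_{0,i,\varepsilon}\colonequals\max\{v_{0,i},\varepsilon\}$, apply the identity from part $(i)$ to the corresponding solutions $u_{i,\varepsilon}, v_{i,\varepsilon}$, and integrate on $[0,T]$. The mean value theorem yields
\begin{equation*}
(\log a-\log b)(a-b) \geq \frac{(a-b)^2}{\max\{a,b\}}\qquad \text{for all }a,b>0,
\end{equation*}
so each of the pair contributions in the dissipation is bounded below by $(a-b)^2/\max\{a,b\}$. Using $\max\{a,b\}\leq \sum_\ell(u_{\ell,\varepsilon}+v_{\ell,\varepsilon})\leq 6M$, the hypothesis $\alpha-1\leq 0$, and the growth assumption $k\geq k_1(x)\bigl(\sum_\ell(u_\ell+v_\ell)\bigr)^\alpha$, I obtain
\begin{equation*}
\frac{k(u_{1,\varepsilon},\ldots,v_{3,\varepsilon},x)}{\max\{a,b\}} \geq k_1(x)\Big(\sum_\ell(u_{\ell,\varepsilon}+v_{\ell,\varepsilon})\Big)^{\alpha-1} \geq (6M)^{\alpha-1}\,\mathrm{ess\,inf}_{x\in\mathbb{T}^3}k_1(x),
\end{equation*}
which produces the desired lower bound on the full dissipation for the $\varepsilon$-problem. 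Passing to the limit $\varepsilon\to 0$ via weak $L^2$ compactness, continuity of $\sum_i\mathrm{H}[u_{0,i,\varepsilon},v_{0,i,\varepsilon}]$ in $\varepsilon$, and weak lower semicontinuity of both $\sum_i\mathrm{H}$ and the $L^2$ norms appearing on the right-hand side (as carried out at the end of the proof of Lemma \ref{a.e.H}$(ii)$) then yields the claimed inequality.

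The main obstacle is essentially combinatorial: correctly rearranging the expansion of $\sum_{i=1}^3 \log u_i(\rho-6u_i) + \sum_{i=1}^3 \log v_i(\rho-6v_i)$ into the three groups of paired differences in \eqref{dtH3}, in particular handling the mixed sums $\sum_{i,j}\log u_i(v_j-u_i) + \sum_{i,j}\log v_i(u_j-v_i)$ by an index swap so that they combine into differences of the form $(\log u_i - \log v_j)(u_i-v_j)$. No genuinely new analytic ingredient is needed beyond the one-dimensional case: once the tensor-product mollifier produces smooth approximations satisfying the regularized vector equations and $k$ is uniformly bounded (by $\kappa_2(\delta)$ in part $(i)$; by $\kappa_1(M)$ in part $(ii)$), the dominated-convergence and weak-compactness arguments of Lemma \ref{a.e.H} transfer essentially verbatim.
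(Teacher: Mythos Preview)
Your proposal is correct and follows essentially the same approach as the paper: for part $(i)$ the paper simply states that the argument is identical to Lemma~\ref{a.e.H}$(i)$ and omits the details, while you spell out the tensor-product mollifier and the index-swap symmetrization; for part $(ii)$ your $\varepsilon$-regularization of the initial data, mean-value-theorem lower bound $(\log a-\log b)(a-b)\ge (a-b)^2/\max\{a,b\}$, estimate $k/\max\{a,b\}\ge (6M)^{\alpha-1}\mathrm{ess\,inf}\,k_1$, and passage to the limit via weak $L^2$-compactness and lower semicontinuity of $\mathrm{H}$ and the $L^2$ norm match the paper's proof line by line.
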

\begin{proof}
$(i)$ The proof follows from the same arguments as   the one dimensional case in Lemma \ref{a.e.H} $(i)$, hence we omit the details. \\
$(ii)$ As in the proof of Lemma \ref{a.e.H} $(ii),$ for $\varepsilon>0,$ we consider the weak solution $u_{i,\varepsilon}, v_{i,\varepsilon},$ $i\in \{1,2,3\}$ with the initial data  $u_{0,i,\varepsilon}\colonequals \max\{u_{0,i}, \varepsilon\}, v_{0,i,\varepsilon} \colonequals \max\{v_{0,i}, \varepsilon\}.$
Then \eqref{dtH3} holds for  $u_{i,\varepsilon}, v_{i,\varepsilon},$ i.e.,
\begin{align}
\frac{d}{dt}\sum_{i=1}^3 \mathrm{H}[u_{i,\varepsilon},v_{i,\varepsilon}]=& -\frac{1}{2}\int_{\mathbb{T}^3}k\sum_{\substack{i,j=1\\i\neq j}}^3(\log{u_{i,\varepsilon}}-\log{u_{j,\varepsilon}})(u_{i,\varepsilon}-u_{j,\varepsilon}) dx\nonumber\\
 &-\frac{1}{2}\int_{\mathbb{T}^3}k\sum_{\substack{i,j=1\\i\neq j}}^3(\log{v_{i,\varepsilon}}-\log{v_{j,\varepsilon}})(v_{i,\varepsilon}-v_{j,\varepsilon}) dx\nonumber \\
 &-\frac{1}{2} \int_{\mathbb{T}^3}k\sum_{i, j=1}^3(\log{u_{i,\varepsilon}}-\log{v_{j,\varepsilon}})(u_{i,\varepsilon}-v_{j,\varepsilon}) dx\leq 0. 
\end{align}
The mean value theorem  provides that there exists $\theta\in [0,1]$ such that   \begin{align*}
\frac{d}{dt}\sum_{i=1}^3 \mathrm{H}[u_{i,\varepsilon},v_{i,\varepsilon}]=& -\frac{1}{2}\int_{\mathbb{T}^3} k\sum_{\substack{i,j=1\\i\neq j}}^3\frac{(u_{i,\varepsilon}-u_{j,\varepsilon})^2}{\theta u_{i,\varepsilon}+(1-\theta)u_{j,\varepsilon}} dx
 -\frac{1}{2}\int_{\mathbb{T}^3}k\sum_{\substack{i,j=1\\i\neq j}}^3\frac{(v_{i,\varepsilon}-v_{j,\varepsilon})^2}{\theta v_{i,\varepsilon}+(1-\theta)v_{j,\varepsilon}} dx\nonumber \\
 &-\frac{1}{2} \int_{\mathbb{T}^3}k\sum_{i, j=1}^3\frac{(u_{i,\varepsilon}-v_{j,\varepsilon})^2}{\theta u_{i,\varepsilon}+(1-\theta)v_{j,\varepsilon}} dx\\
 \leq & -\frac{1}{2}\int_{\mathbb{T}^3}k\sum_{\substack{i,j=1\\i\neq j}}^3\frac{(u_{i,\varepsilon}-u_{j,\varepsilon})^2}{\max\{u_{i,\varepsilon},u_{j,\varepsilon}\}} dx-\frac{1}{2}\int_{\mathbb{T}^3}k\sum_{\substack{i,j=1\\i\neq j}}^3\frac{(v_{i,\varepsilon}-v_{j,\varepsilon})^2}{\max\{v_{i,\varepsilon},v_{j,\varepsilon}\}} dx\nonumber \\
 &-\frac{1}{2} \int_{\mathbb{T}^3}k\sum_{i, j=1}^3\frac{(u_{i,\varepsilon}-v_{j,\varepsilon})^2}{\max\{u_{i,\varepsilon},v_{j,\varepsilon}\}} dx. 
\end{align*}
Because of the lower bound on $k,$ the terms  $$\frac{k}{\max\{u_{i,\varepsilon},u_{j,\varepsilon}\}},\,\,\frac{k}{\max\{v_{i,\varepsilon},v_{j,\varepsilon}\}},\,\, \frac{k}{\max\{u_{i,\varepsilon},v_{j,\varepsilon}\}}$$
are bigger than $(u_{1,\varepsilon}+u_{2,\varepsilon}+u_{3,\varepsilon}+v_{1,\varepsilon}+v_{2,\varepsilon}+v_{3,\varepsilon})^{\alpha-1} \mathrm{ess\,inf}_{x\in \mathbb{T}^3}k_1(x).$ Then,  the upper bounds on the solution and  $\alpha\in [0,1]$ imply  
$$(u_{1,\varepsilon}+u_{2,\varepsilon}+u_{3,\varepsilon}+v_{1,\varepsilon}+v_{2,\varepsilon}+v_{3,\varepsilon})^{\alpha-1} \mathrm{ess\,inf}_{x\in \mathbb{T}^3}k_1(x)\geq (6M)^{\alpha-1}\mathrm{ess\,inf}_{x\in \mathbb{T}^3}k_1(x).$$
The above estimates yield 
\begin{align*}
\frac{d}{dt}&\sum_{i=1}^3 \mathrm{H}[u_{i,\varepsilon},v_{i,\varepsilon}]\\ \leq & -\frac{(6M)^{\alpha-1}\mathrm{ess\,inf}_{x\in \mathbb{T}^3}k_1(x)}{2}\int_{\mathbb{T}^3}\Big[\sum_{\substack{i,j=1\\i\neq j}}^3\big((u_{i,\varepsilon}-u_{j,\varepsilon})^2 +(v_{i,\varepsilon}-v_{j,\varepsilon})^2\big) + \sum_{i, j=1}^3(u_{i,\varepsilon}-v_{j,\varepsilon})^2 \Big]dx. 
\end{align*}
We integrate  this estimate on $[0,T]$ 
\begin{align*}
&\sum_{i=1}^3 \mathrm{H}[u_{i,\varepsilon}(T),v_{i,\varepsilon}(T)]-\sum_{i=1}^3 \mathrm{H}[u_{i,\varepsilon}(0),v_{i,\varepsilon}(0)]\\ \leq & -\frac{(6M)^{\alpha-1}\mathrm{ess\,inf}_{x\in \mathbb{T}^3}k_1(x)}{2}\int_0^T\int_{\mathbb{T}^3}\Big[\sum_{\substack{i,j=1\\i\neq j}}^3\big((u_{i,\varepsilon}-u_{j,\varepsilon})^2 +(v_{i,\varepsilon}-v_{j,\varepsilon})^2\big) + \sum_{i, j=1}^3(u_{i,\varepsilon}-v_{j,\varepsilon})^2 \Big]dxdt. 
\end{align*}
As in the proof of Lemma \ref{a.e.H} $(ii)$,  $u_{i,\varepsilon}$ and $v_{j,\varepsilon}$ are uniformly  bounded by $M$ for all $i\in\{1,2,3\},$ hence they converge weakly in $L^2([0,T]\times \mathbb{T}^3)$ (or in $L^2( \mathbb{T}^3)$ for every fixed $t\in [0,T]$) to the weak solution $u_{i},v_{j}$ with the initial data $u_{0,i},v_{0,j}.$ The claimed estimate follows by letting $\varepsilon \to 0 $ in the above estimate and using  the lower semi-continuity of  $ \mathrm{H}$ and the $L^2$ norm w.r.t weak convergence. 
\end{proof}
Let $u_i,v_i,$ $i\in \{1,2,3\}$ be non-negative functions in $ L^{\infty}(\mathbb{T}^3).$ We recall the mass density
$$\rho\colonequals \sum_{i=1}^3 (u_i+v_i)$$ and the momentum $$j\colonequals \begin{pmatrix}
u_1-v_1\\
u_2-v_2\\
u_3-v_3
\end{pmatrix} \in \mathbb{R}^3.$$
Let $m_{\infty}\colonequals \frac{1}{6}\int_{\mathbb{T}^3}\rho dx>0$ and  $\phi$ be the solution of the Poisson equation
\begin{equation}\label{Pois3}
-\Delta_{x}\phi =\rho-6m_{\infty},\,\,\,\,\,\, x\in \mathbb{T}^3.
\end{equation}
Similar to \eqref{E.R}, there exists a unique solution $\phi$ satisfying 
\begin{equation}\label{E.R3}
||\phi||_{H^2(\mathbb{T}^3)}\leq C_R ||\rho-6m_{\infty}||_{L^2(\mathbb{T}^3)}
\end{equation}
for some $C_R>0.$
With the above notations, for non-negative function $u_i,v_i\in  L^{\infty}(\mathbb{T}^3),\, i\in \{1,2,3\},$   we define 
\begin{equation}\label{E3}
\mathrm{E}[u_1, u_2, u_3,v_1,v_2,v_3]\colonequals \sum_{i=1}^3\mathrm{H}[u_i,v_i]+\varepsilon \int_{\mathbb{T}^3}j\cdot \nabla_x \phi dx, 
\end{equation}
where $\varepsilon>0$  will be fixed later. 

Following the proofs of Lemma \ref{Heqv} and Lemma \ref{Eeqv} we can prove the following lemma; we omit its proof.
\begin{lemma}
Let $0 \leq u_{i}, v_i\in L^{\infty}(\mathbb{T}^3)$ for all $i\in \{1,2,3\},$  $\displaystyle M\colonequals \max_{i\in \{1,2,3\}}\{||u_i||_{L^{\infty}},\,||v_i||_{L^{\infty}}\},$ and $m_{\infty}\colonequals \frac{1}{6}\int_{\mathbb{T}^3}\rho dx>0$. If $\varepsilon>0$ is small enough,  there exist positive constants  $C_1=C_1(m_{\infty},M)$ and $C_2=C_2(m_{\infty},M)$ such that 
\begin{align}\label{M/mE3}
  \frac{1}{C_2}\int_{\mathbb{T}^3} \sum_{i=1}^3\left[\left({u_i}-{m_{\infty}}\right)^2+\left({v_i}-{m_{\infty}}\right)^2\right]dx &\leq \mathrm{E}[u_1, u_2, u_3,v_1,v_2,v_3]\nonumber \\
  &\leq  \frac{1}{C_1}\int_{\mathbb{T}^3} \sum_{i=1}^3\left[\left({u_i}-{m_{\infty}}\right)^2+\left({v_i}-{m_{\infty}}\right)^2\right]dx.
\end{align}
\end{lemma}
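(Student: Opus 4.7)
The plan is to mimic the argument of Lemmas \ref{Heqv} and \ref{Eeqv} componentwise, the only new ingredient being the vector-valued bookkeeping in three dimensions.

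First, I would establish the three-dimensional analog of Lemma \ref{Heqv}, namely
\begin{equation*}
\frac{1}{2M}\int_{\mathbb{T}^3}\sum_{i=1}^3\left[({u_i}-{m_\infty})^2 + ({v_i}-{m_\infty})^2\right]dx \leq \sum_{i=1}^3 \mathrm{H}[u_i, v_i] \leq \frac{1}{m_\infty}\int_{\mathbb{T}^3}\sum_{i=1}^3\left[({u_i}-{m_\infty})^2 + ({v_i}-{m_\infty})^2\right]dx.
\end{equation*}
Since $\sum_{i=1}^3\mathrm{H}[u_i,v_i]$ is a sum of six terms each of the form $\int f(\cdot)\, dx$ with $f(s) = \tfrac{s}{m_\infty}\log(\tfrac{s}{m_\infty}) - \tfrac{s}{m_\infty} + 1$, the same Taylor expansion around $m_\infty$ that was used in the proof of Lemma \ref{Heqv} (giving a lower bound via $f(s) \geq \tfrac{(s-m_\infty)^2}{2m_\infty M}$ and an upper bound via $f(s) \leq \tfrac{(s-m_\infty)^2}{m^2_\infty}$) applies term-by-term. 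Note that here the mean $m_\infty$ involves all six components, not just one pair, but the pointwise bounds on $f$ do not depend on this aspect.

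Second, I would estimate the cross term $\int_{\mathbb{T}^3} j\cdot \nabla_x \phi\, dx$ using elliptic regularity \eqref{E.R3} and Cauchy-Schwarz:
\begin{equation*}
\left|\int_{\mathbb{T}^3} j\cdot \nabla_x \phi\, dx\right| \leq \|j\|_{L^2(\mathbb{T}^3)}\, \|\nabla_x\phi\|_{L^2(\mathbb{T}^3)} \leq C_R\, \|j\|_{L^2(\mathbb{T}^3)}\, \|\rho - 6 m_\infty\|_{L^2(\mathbb{T}^3)}.
\end{equation*}
Expanding $j_i = (u_i - m_\infty) - (v_i - m_\infty)$ and $\rho - 6m_\infty = \sum_{i=1}^3 \bigl[(u_i - m_\infty) + (v_i - m_\infty)\bigr]$, the elementary inequalities $(a-b)^2 \leq 2a^2+2b^2$ and $(\sum_{k=1}^6 a_k)^2 \leq 6\sum a_k^2$ give
\begin{equation*}
|j|^2 + (\rho - 6m_\infty)^2 \leq 8\sum_{i=1}^3\bigl[(u_i-m_\infty)^2 + (v_i-m_\infty)^2\bigr],
\end{equation*}
so by AM-GM
\begin{equation*}
\left|\int_{\mathbb{T}^3} j\cdot \nabla_x\phi\, dx\right| \leq 4 C_R \int_{\mathbb{T}^3}\sum_{i=1}^3\bigl[(u_i-m_\infty)^2 + (v_i-m_\infty)^2\bigr] dx.
\end{equation*}

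Combining these two bounds yields
\begin{equation*}
\frac{1 - 8\varepsilon C_R M}{2M}\,\|\cdot\|^2 \leq \mathrm{E}[u_1,u_2,u_3,v_1,v_2,v_3] \leq \frac{1 + 4\varepsilon C_R m_\infty}{m_\infty}\,\|\cdot\|^2,
\end{equation*}
where $\|\cdot\|^2$ stands for the weighted $L^2$-type quantity on the right of \eqref{M/mE3}. Choosing $\varepsilon \in \bigl(0,\tfrac{1}{8MC_R}\bigr)$ gives \eqref{M/mE3} with $C_2 := \tfrac{2M}{1-8\varepsilon C_R M}$ and $C_1 := \tfrac{m_\infty}{1+4\varepsilon C_R m_\infty}$. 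The main (though routine) point of care is simply keeping track of the combinatorial factors when passing from pairs $(u,v)$ to the six-tuple $(u_1,\ldots,v_3)$; no new analytic ingredient is needed beyond the one-dimensional arguments.
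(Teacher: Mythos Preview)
Your proposal is correct and follows exactly the approach the paper indicates: the paper omits the proof entirely, simply stating that it follows the proofs of Lemmas \ref{Heqv} and \ref{Eeqv}, and you have carried out precisely that program componentwise with the appropriate combinatorial adjustments (the factor $4C_R$ in place of $2C_R$, and the threshold $\varepsilon<\tfrac{1}{8MC_R}$ in place of $\tfrac{1}{4MC_R}$).
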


\bigskip

\begin{proof}[\textbf{Proof of Theorem \ref{main3}}]  Theorem \ref{exist3} provides that $ u_i$ and $v_i$ satisfy $$\delta\leq u_i,v_i\leq M,\,\,\, \forall\, t\geq 0,\,\,\forall\,x\in \mathbb{T}^3.$$ Since $k(u_1,...,v_3,x)$ is an interaction rate  of type 3, there exist $\kappa_3=\kappa_3(\delta, M)$ and $\kappa_4=\kappa_4(\delta, M)$ such that \begin{equation}\label{kappa}\kappa_3\leq k(u_1,...,v_3,x)\leq \kappa_4 ,\,\,\, \forall\, t\geq 0,\,\,\forall\,x\in \mathbb{T}^3.
\end{equation}
We have 
\begin{equation}\label{dE3}\frac{d}{dt}\mathrm{E}[u_1(t),...,v_3(t)]=\frac{d}{dt}\sum_{i=1}^3\mathrm{H}[u_i(t),v_i(t)]+\varepsilon \frac{d}{dt} \int_{\mathbb{T}} j\cdot \nabla_x \phi dx.
\end{equation}
Applying the mean value theorem to the right hand side of \eqref{dtH3} and doing the same arguments to obtain \eqref{dH},  we can show
\begin{align}\label{dtH3<}
\frac{d}{dt}\sum_{i=1}^3 \mathrm{H}[u_i,v_i]\leq & -\frac{\kappa_3}{2M}\int_{\mathbb{T}^3}\sum_{\substack{i,j=1\\i\neq j}}^3(u_i-u_j)^2 dx-\frac{\kappa_3}{2M} \int_{\mathbb{T}^3}\sum_{\substack{i,j=1\\i\neq j}}^3(v_i-v_j)^2 dx\nonumber \\
 &-\frac{\kappa_3}{2M} \int_{\mathbb{T}^3}\sum_{i, j=1}^3(u_i-v_j)^2 dx\leq 0. 
\end{align}
The second equation in \eqref{2Eq0} can be written as \begin{equation}\label{j/3}\partial_t j=-\frac{c}{3}\nabla_x \rho-6kj+\frac{c}{3}\begin{pmatrix}
\partial_{x_1}(u_2+u_3+v_2+v_3-2u_1-2v_1)\\
\partial_{x_2}(u_1+u_3+v_1+v_3-2u_2-2v_2)\\
\partial_{x_3}(u_1+u_2+v_1+v_2-2u_3-2v_3)
\end{pmatrix}\in [H^{-1}(\mathbb{T}^3)]^3.
\end{equation} 
By differentiating \eqref{Pois3} w.r.t $t>0,$ we get  $-\Delta_x(\partial_t \phi)=-\partial_t \rho=c\mathrm{div}_x j\in H^{-1}(\mathbb{T}^3)$ and so $\nabla_x (\partial_t \phi)\in [H^1(\mathbb{T}^3)]^3.$
Lemma \ref{le:partial_t rho} with $V\colonequals H^1(\mathbb{T}^3)$ and $H\colonequals L^2(\mathbb{T}^3)$ provides \begin{equation}\label{djphi+}\frac{d}{dt} \int_{\mathbb{T}} j\cdot \nabla_x \phi dx=\langle\partial_t j, \nabla_x \phi\rangle_{H^{-1},H^1}+\int_{\mathbb{T}} j\cdot \nabla_x (\partial_t \phi) dx.
\end{equation}
We use \eqref{j/3} to compute 
\begin{align}\label{dtjphi}
\langle \partial_t j, \nabla_x \phi\rangle_{H^{-1}, H^1} =&-\frac{c}{3}\langle \nabla_x\rho , \nabla_x \phi \rangle_{H^{-1}, H^1} -6\int_{\mathbb{T}^3}k j\cdot \nabla_x \phi dx\nonumber\\
&+\frac{c}{3}  \langle\partial_{x_1}(u_2+u_3+v_2+v_3-2u_1-2v_1), \partial_{x_1}\phi \rangle_{H^{-1}, H^1} \nonumber\\&+\frac{c}{3}\langle  \partial_{x_2}(u_1+u_3+v_1+v_3-2u_2-2v_2), \partial_{x_2} \phi \rangle_{H^{-1}, H^1}\nonumber\\
&+\frac{c}{3}\langle \partial_{x_3}(u_1+u_2+v_1+v_2-2u_3-2v_3) , \partial_{x_3} \phi\rangle_{H^{-1}, H^1} \nonumber \\
=&-\frac{c}{3}\int_{\mathbb{T}^3}(\rho-6m_{\infty})^2 dx-6\int_{\mathbb{T}^3}k j\cdot \nabla_x \phi dx \nonumber \\
&-\frac{c}{3} \int_{\mathbb{T}^3}\partial^2_{x_1x_1}\phi (u_2+u_3+v_2+v_3-2u_1-2v_1) dx \nonumber \\&-\frac{c}{3}\int_{\mathbb{T}^3}\partial^2_{x_2x_2} \phi (u_1+u_3+v_1+v_3-2u_2-2v_2) dx \nonumber \\
&-\frac{c}{3}\int_{\mathbb{T}^3}\partial^2_{x_3x_3} \phi (u_1+u_2+v_1+v_2-2u_3-2v_3) dx.
\end{align}
The inequality of arithmetic and geometric means (AM-GM inequality) shows 
\begin{align}\label{U}
\partial^2_{x_1x_1}\phi (u_2+u_3+v_2+v_3-2u_1-2v_1) &+\partial^2_{x_2x_2} \phi (u_1+u_3+v_1+v_3-2u_2-2v_2)\nonumber \\ &+\partial^2_{x_3x_3} \phi (u_1+u_2+v_1+v_2-2u_3-2v_3) \nonumber \\
\leq&  \sqrt{(\partial^2_{x_1x_1}\phi)^2+(\partial^2_{x_2x_2}\phi)^2+(\partial^2_{x_3x_3}\phi)^2}\sqrt{U},
\end{align}
where  $$U\colonequals (u_2+u_3+v_2+v_3-2u_1-2v_1)^2+(u_1+u_3+v_1+v_3-2u_2-2v_2)^2 +(u_1+u_2+v_1+v_2-2u_3-2v_3)^2.$$
 $U$ can be estimated as
\begin{equation}\label{U1}
U\leq 8[(u_1-v_2)^2+(u_1-v_3)^2+(u_2-v_1)^2+(u_2-v_3)^2+(u_3-v_1)^2+(u_3-v_2)^2]=4\sum_{\substack{i,j=1\\i\neq j}}^3(u_i-v_j)^2.
\end{equation}
\eqref{dtjphi}, \eqref{U},  \eqref{U1}, and \eqref{kappa} imply 
\begin{align*}
\langle \partial_t j, \nabla_x \phi \rangle_{H^{-1}, H^1} 
\leq & -\frac{c}{3}\int_{\mathbb{T}^3}(\rho-6m_{\infty})^2 dx+6\kappa_4 \sqrt{\int_{\mathbb{T}^3} |j|^2dx}\sqrt{\int_{\mathbb{T}^3 }|\nabla_x \phi|^2 dx}\\
&+ \frac{2c}{3}\sqrt{\int_{\mathbb{T}^3}\big((\partial^2_{x_1x_1}\phi)^2+(\partial^2_{x_2x_2}\phi)^2+(\partial^2_{x_3x_3}\phi)^2\big)dx}\sqrt{\int_{\mathbb{T}^3}\sum_{i\neq j}^3(u_i-v_j)^2dx}.
\end{align*} Because of the elliptic regularity in \eqref{E.R3}, we have
\begin{align*}
\langle \partial_t j, \nabla_x \phi \rangle_{H^{-1}, H^1}
\leq & -\frac{c}{3}\int_{\mathbb{T}^3}(\rho-6m_{\infty})^2 dx+6\kappa_4C_{R} \sqrt{\int_{\mathbb{T}^3} |j|^2dx}\sqrt{\int_{\mathbb{T}^3 }(\rho-6m_{\infty})^2 dx}\\
&+ \frac{2cC_{R}}{3}\sqrt{\int_{\mathbb{T}^3 }(\rho-6m_{\infty})^2 dx}\sqrt{\int_{\mathbb{T}^3}\sum_{i\neq j}^3(u_i-v_j)^2dx}.
\end{align*}
Using the elementary inequalities $$|j|^2\leq \frac{1}{2}\sum_{i, j=1}^3(u_i-v_j)^2,\,\,\,\, \sum_{\substack{i,j=1\\i\neq j}}^3(u_i-v_j)^2\leq \sum_{i, j=1}^3(u_i-v_j)^2,$$ 
 and the H\"older inequality, we get
\begin{align}\label{dtj<sum}
\langle \partial_t j, \nabla_x \phi \rangle_{H^{-1}, H^1}
\leq & -\left(\frac{c}{3}-\frac{\eta cC_{R}}{3}-3\eta \kappa_4C_{R}\right)\int_{\mathbb{T}^3}(\rho-6m_{\infty})^2 dx\nonumber\\&+\left(\frac{3\kappa_4 C_{R}}{2\eta}+\frac{cC_{R}}{3 \eta}\right) \int_{\mathbb{T}^3}\sum_{i,j=1}^3(u_i-v_j)^2dx,
\end{align}
where $\eta>0$ will be fixed later.

Next, we use $\Delta_x (\partial_t \phi)=-\partial_t \rho=c\mathrm{div}_x j\in H^{-1}(\mathbb{T}^3)$ and $\partial_t \phi\in H^{1}(\mathbb{T}^3)$ to estimate 
\begin{align*}
\int_{\mathbb{T}^3} |\nabla_x(\partial_{t} \phi)|^2 dx&=-\langle \Delta_x(\partial_{t} \phi),\partial_t \phi \rangle_{H^{-1}, H^1} =-c  \langle\mathrm{div}_x j, \partial_{t} \phi \rangle_{H^{-1}, H^1}\\ & =c\int_{\mathbb{T}^3} \nabla_x(\partial_{t} \phi) \cdot j dx\leq c \sqrt{\int_{\mathbb{T}^3} |\nabla_x (\partial_{t} \phi)|^2 dx}\sqrt{\int_{\mathbb{T}} |j|^2dx}.
\end{align*}
We conclude $$\int_{\mathbb{T}^3} |\nabla_x (\partial_{t} \phi)|^2 dx\leq c^2\int_{\mathbb{T}^3} |j|^2dx$$
and so
\begin{equation}\label{dfi3}
\int_{\mathbb{T}^3} \nabla_x(\partial_{t} \phi) \cdot j dx\leq \sqrt{\int_{\mathbb{T}^3} |\nabla_x(\partial_{t} \phi)|^2 dx}\sqrt{\int_{\mathbb{T}^3} |j|^2dx}\leq c\int_{\mathbb{T}^3} |j|^2dx\leq c \int_{\mathbb{T}^3}\sum_{i,j=1}^3(u_i-v_j)^2dx.
\end{equation}
\eqref{dE3}, \eqref{dtH3<}, \eqref{dtj<sum}, and  \eqref{dfi3} yield
\begin{align}\label{dtEep}
\frac{d}{dt}\mathrm{E}[u_1(t),...,v_3(t)] \leq & -\frac{\kappa_3}{2M}\int_{\mathbb{T}^3}\sum_{\substack{i,j=1\\i\neq j}}^3(u_i-u_j)^2 dx-\frac{\kappa_3}{2M}\int_{\mathbb{T}^3}\sum_{\substack{i,j=1\\i\neq j}}^3(v_i-v_j)^2 dx\nonumber \\
 &-\left(\frac{\kappa_3}{2M} -\varepsilon \left(\frac{3\kappa_4C_{R}}{2\eta}+\frac{cC_{R}}{3 \eta}\right)-\frac{\varepsilon c}{2}\right)\int_{\mathbb{T}^3}\sum_{i, j=1}^3(u_i-v_j)^2 dx\nonumber \\&-\varepsilon\left(\frac{c}{3}-\frac{\eta cC_{R}}{3}-3\eta \kappa_4 C_{R}\right)\int_{\mathbb{T}^3}(\rho-6m_{\infty})^2 dx. 
\end{align}
We first fix $\eta>0$ such that
$$\frac{c}{3}-\frac{\eta cC_{R}}{3}-3\eta \kappa_4 C_{R}>0.$$ 
Then we fix $\varepsilon>0$ such that 
$$\frac{\kappa_3}{2M} -\varepsilon \left(\frac{3\kappa_4C_{R}}{2\eta}+\frac{cC_{R}}{3 \eta}\right)-\frac{\varepsilon c}{2}>0$$
and \eqref{M/mE3} holds. 
AM-GM inequality implies for any  $i\in\{1,2,3\}$
$$(\rho-6m_{\infty})^2+\sum_{\substack{j=1\\j\neq i}}^3(u_i-u_j)^2+\sum_{j=1}^3(u_i-v_j)^2\geq 6(u_i-m_{\infty})^2$$
and 
$$(\rho-6m_{\infty})^2+\sum_{\substack{j=1\\j\neq i}}^3(v_i-v_j)^2+\sum_{j=1}^3(v_i-u_j)^2\geq 6(v_i-m_{\infty})^2.$$
We sum these inequalities w.r.t $i\in \{1,2,3\}$
\begin{equation}\label{6rho}6(\rho-6m_{\infty})^2+\frac{1}{2}\sum_{\substack{i,j=1\\i\neq j}}^3(u_i-u_j)^2+\frac{1}{2}\sum_{\substack{i,j=1\\i\neq j}}^3(v_i-v_j)^2+\sum_{i,j=1}^3(v_i-u_j)^2\geq 6 \sum_{i=1}^3\big((u_i-m_{\infty})^2+(v_i-m_{\infty})^2\big).
\end{equation}
We denote 
$$C\colonequals \min\left\{\frac{\kappa_3}{2M} -\varepsilon \left(\frac{3C_{R}\kappa_4}{2\eta}+\frac{cC_{R}}{3 \eta}\right)-\frac{\varepsilon c}{2}, \,\,\frac{\varepsilon}{6}\left(\frac{c}{3}-\frac{\eta cC_{R}}{3}-3\eta C_{R}\kappa_4\right) \right\}.$$ 
\eqref{dtEep} and  \eqref{6rho} show \begin{align*}
\frac{d}{dt}\mathrm{E}[u_1(t),&...,v_3(t)] \\ \leq &  -C\int_{\mathbb{T}^3}\Big[6(\rho-6m_{\infty})^2+\sum_{\substack{i,j=1\\j\neq i}}^3(u_i-u_j)^2+\sum_{\substack{i,j=1\\j\neq i}}^3(v_i-v_j)^2+2\sum_{i,j=1}^3(v_i-u_j)^2\Big]dx\\
\leq &  -6C\int_{\mathbb{T}^3}\sum_{i=1}^3\big((u_i-m_{\infty})^2+(v_i-m_{\infty})^2\big)dx\\
\leq &-6CC_1  \mathrm{E}[u_1(t),...,v_3(t)],
\end{align*}
where the last inequality follows from \eqref{M/mE3}.
Gr\"onwall's inequality yields \begin{equation}\label{decE}
\mathrm{E}[u(t),v(t)]\leq e^{-2\tilde{\lambda} t}\mathrm{E}[u_0,v_0],\,\,\,\,\,\forall \, t\geq 0
\end{equation}
 with $\tilde{\lambda} \colonequals 3 CC_1.$
This estimate and \eqref{M/mE3} provide
\begin{align*}
\sum_{i=1}^3\int_{\mathbb{T}^3}\big(u_i(t,x) &-m_{\infty}\big)^2dx+\sum_{i=1}^3\int_{\mathbb{T}^3}\big({v_i(t,x)}-{m_{\infty}}\big)^2dx\\ \leq & \frac{C_2}{C_1}e^{-2\tilde{\lambda} t} \left[\sum_{i=1}^3\int_{\mathbb{T}^3}\left({u_{0,i}(x)}-{m_{\infty}}\right)^2dx+\sum_{i=1}^3\int_{\mathbb{T}^3}\left({v_{0,i}(x)}-{m_{\infty}}\right)^2dx\right],\,\,\, \forall\, t\geq 0.
\end{align*} 
\end{proof}

\bigskip

\begin{proof}[\bf{Proof of Theorem \ref{main13}}]
Theorem \ref{exist3} provides the solution $u_i, v_i,$ $i\in \{1,2,3\}$ satisfies
$$0\leq u_i,v_i\leq M $$
for all $t\geq 0,$ $ x\in \mathbb{T}^3,$ and $i\in \{1,2,3\}.$ Since we assume $k$ is an interaction rate of type 1, we have 
$$0\leq k(u_1,...v_3,x)\leq k_1,\,\,\forall\,t\geq 0,\, \forall\,x\in \mathbb{T}^3.$$
We consider again the functional $\mathrm{E}[u_1(t),...,v_3(t)]$ in \eqref{E3}. We have estimated $\sum_{i=1}^3 \mathrm{H}[u_i,v_i]$ in Lemma \ref{a.e.H3} $(ii)$.  We want to obtain a similar estimate for $\int_{\mathbb{T}^3}j\cdot \nabla_x \phi dx.$ Let $T>0.$ \eqref{djphi+}, \eqref{dtj<sum}, and \eqref{dfi3} show
\begin{align*}\int_{\mathbb{T}^3}j(T)\cdot \nabla_x \phi(T) dx\leq & \int_{\mathbb{T}^3}j(0)\cdot \nabla_x \phi(0) dx-\left(\frac{c}{3}-\frac{\eta cC_{R}}{3}-3\eta \kappa_1C_{R}\right)\int_0^T\int_{\mathbb{T}^3}(\rho-6m_{\infty})^2 dxdt\nonumber\\&+\left(\frac{3\kappa_1 C_{R}}{2\eta}+\frac{cC_{R}}{3 \eta}+\frac{c}{2}\right) \int_0^T\int_{\mathbb{T}^3}\sum_{i,j=1}^3(u_i-v_j)^2dxdt,
\end{align*} 
where $\eta>0$ will be fixed later. Combining this estimate with the one in Lemma \ref{a.e.H3} $(ii),$ we get \begin{align}\label{lasE}
\mathrm{E}&[u_1(T),...,v_3(T)]\leq   \mathrm{E}[u_1(0),...,v_3(0)]-\varepsilon \left(\frac{c}{3}-\frac{\eta cC_{R}}{3}-3\eta \kappa_1C_{R}\right)\int_0^T\int_{\mathbb{T}^3}(\rho-6m_{\infty})^2 dxdt\nonumber\\&-\left[\frac{(6M)^{\alpha-1}\mathrm{ess\,inf}_{x\in \mathbb{T}^3}k_1(x)}{2}-\varepsilon\left(\frac{3\kappa_1 C_{R}}{2\eta}+\frac{cC_{R}}{3 \eta}+\frac{c}{2}\right)\right] \int_0^T\int_{\mathbb{T}^3}\sum_{i,j=1}^3(u_i-v_j)^2dxdt \nonumber\\
&-\frac{(6M)^{\alpha-1}\mathrm{ess\,inf}_{x\in \mathbb{T}^3}k_1(x)}{2}\int_0^T\int_{\mathbb{T}^3}\Big[\sum_{\substack{i,j=1\\i\neq j}}^3\big((u_i-u_j)^2 +(v_i-v_j)^2\big) \Big]dxdt.
\end{align}
We first fix $\eta>0$ such that
$$\frac{c}{3}-\frac{\eta cC_{R}}{3}-3\eta \kappa_1C_{R}>0.$$ 
Then we fix $\varepsilon>0$ such that  
$$\frac{(6M)^{\alpha-1}\mathrm{ess\,inf}_{x\in \mathbb{T}^3}k_1(x)}{2}-\varepsilon\left(\frac{3\kappa_1 C_{R}}{2\eta}+\frac{cC_{R}}{3 \eta}+\frac{c}{2}\right)>0$$
and \eqref{M/mE3} holds.
 Denoting $$C\colonequals \min\left\{\left[\frac{(6M)^{\alpha-1}\mathrm{ess\,inf}_{x\in \mathbb{T}^3}k_1(x)}{2}-\varepsilon\left(\frac{3\kappa_1 C_{R}}{2\eta}+\frac{cC_{R}}{3 \eta}+\frac{c}{2}\right)\right], \,\frac{\varepsilon}{6}\left(\frac{c}{3}-\frac{\eta cC_{R}}{3}-3\eta C_{R}\kappa_1\right) \right\},$$
\eqref{lasE} can be estimated as   \begin{align*}
\mathrm{E}[u_1(T),...,v_3(T)]\leq & \mathrm{E}[u_1(0),...,v_3(0)]\\
&-C\int_0^T\int_{\mathbb{T}^3}\Big[6(\rho-6m_{\infty})^2 +\sum_{i,j=1}^3(u_i-v_j)^2+\frac{1}{2}\sum_{\substack{i,j=1\\i\neq j}}^3\big((u_i-u_j)^2 +(v_i-v_j)^2\big) \Big]dxdt.
\end{align*}
Then \eqref{6rho} and \eqref{M/mE3} yield \begin{align*}
\mathrm{E}[u_1(T),...,v_3(T)]\leq \mathrm{E}[u_1(0),...,v_3(0)]
-6CC_1\int_0^T\mathrm{E}[u_1(t),...,v_3(t)]dt.
\end{align*}
Grönwall's inequality implies $$\mathrm{E}[u_1(t),...,v_3(t)]\leq e^{-2\tilde{\lambda}_{\alpha}} \mathrm{E}[u_1(0),...,v_3(0)],\,\,\,\,\,\forall\, t\geq 0,$$
where $\tilde{\lambda}_{\alpha}\colonequals 3CC_1.$ Finally, \eqref{M/mE3} provides the claimed result with $\tilde{\Lambda}_{\alpha}\colonequals \frac{C_2}{C_1}.$
\end{proof}

\section{Conclusion and Outlook}\label{sec:conclusion}
In this paper we established exponential convergence of weak solutions for a large class of discrete velocity kinetic equations to their unique global equilibrium in the one and three dimensions. We  tried to make   the interaction rate $k$ as general as possible by only requiring  some growth assumptions. Our main results are:
\begin{enumerate}
\item[(a)] Exponential convergence in the $L^2$ norm for the interaction rates $k$ of type 3, which lets $k$  have singularity, but requires positive lower bounds on initial data; see Theorems \ref{main} and \ref{main3}.
\item[(b)] The same convergence result for the interaction rates $k$ of type 1 with a lower growth assumption, which prevents $k$ being singular but lets it be degenerate and   initial data just be non-negative; see Theorems \ref{main1} and \ref{main13}.
\end{enumerate}
In the proofs,  we constructed  suitable Lyapunov functionals which are equivalent to the $L^2$ norm and satisfy Grönwall's inequality. Here we used the fact that a family of functionals involving convex functions decay under the time evolution of solutions. These functionals are not enough to obtain exponential convergence with explicit convergence rates. Hence, we considered  a specific case, i.e., Boltzmann's entropy, and modified  it by adding suitable terms so that the resulting functional is decreasing and  satisfies Grönwall's inequality with explicitly computable constants.

 As an extension of the present work one could study the long-time behavior of these equations in the whole domain or in a bounded domain with various boundary conditions. The periodicity assumption in this paper is made for technical convenience
and could most likely be removed.
In general, discrete velocity  kinetic equations  have  large number of unknown functions, because they can be considered as  approximation of continuous velocity kinetic equations \cite{Mis} and useful for numerics.  We considered here the one and three dimensional cases. Our  method can be easily generalized to higher dimensions. 

Extending our results to other discrete velocity kinetic  equations, would of
course be interesting.
We expect that the techniques used in this paper can be useful to study the long-time behavior  of such equations, as the discrete Boltzmann equation \cite{Review}, e.g.


\bigskip

\textbf{Acknowledgement.}
 The author is funded  by the Deutsche Forschungsgemeinschaft (DFG, German Research Foundation) under Germany’s Excellence Strategy EXC 2044-390685587, Mathematics Münster: Dynamics-Geometry-Structure.\\

\textbf{Data Availability.} There is no data associated with the paper.\\

\textbf{Conflict of interest.} The author has no conflict of interest to declare.

{}
\end{document}